\newtheorem{theorem}{Theorem}[section]
\newtheorem{example}{Example}[section]
\newtheorem{remark}[theorem]{Remark}
\newtheorem{lemma}[theorem]{Lemma}
\newtheorem{proposition}[theorem]{Proposition}
\newtheorem{corollary}[theorem]{Corollary}
\newtheorem{definition}[theorem]{Definition}
\numberwithin{equation}{section} \theoremstyle{definition}
\newcommand\darrowa{\longrightarrow {\mkern -27mu} {\raise 6pt \hbox{ $\pi_0$}} {\mkern 16mu}}
\newcommand{\C}{\ensuremath{\mathbb C}\xspace}
\newcommand{\Z}{\ensuremath{\mathbb{Z}}\xspace}
\newcommand{\N}{\ensuremath{\mathbb{N}}\xspace}
\def\mn{\mathfrak{n}}
\def\mg{\mathfrak{g}}
\def\mh{\mathfrak{h}}
\def\sl{\mathfrak{sl}}
\def\gl{\mathfrak{gl}}
\def\span{\text{span}}
\def\mm{\mathfrak{m}}
\newcommand{\p}{\partial}
\def\p{\partial}
\def\bm{\mathbf{m}}
\def\bs{\mathbf{s}}
\def\ba{\mathbf{a}}
\def\br{\mathbf{r}}
\def\be{\mathbf{e}}
\def\bo{\mathbf{0}}
\def\cd{\mathcal{D}}
\def\bal{\mathbf{\alpha}}
\def\bbe{\mathbf{\beta}}
\begin{document}

\title[Whittaker modules]{  Whittaker category  for the Lie algebra of polynomial vector fields}
\author{ Yufang Zhao and Genqiang Liu }
\date{}\maketitle

\begin{abstract}For any positive integer $n$, let $A_n=\C[t_1,\dots,t_n]$,  $W_n=\text{Der}(A_n)$ and $\Delta_n=\text{Span}\{\frac{\partial}{\partial{t_1}},\dots,\frac{\partial}{\partial{t_n}}\}$.
Then $(W_n, \Delta_n)$ is a Whittaker pair.
 A $W_n$-module $M$ on which $\Delta_n$ operates locally finite  is called a Whittaker module.  We show that each block $\Omega_{\mathbf{a}}^{\widetilde{W}}$ of the category of  $(A_n,W_n)$-Whittaker modules with finite dimensional Whittaker vector spaces is equivalent to the category of finite dimensional modules over $L_n$, where $L_n$ is the Lie subalgebra of $W_n$
 consisting of vector fields vanishing at the origin. As a corollary,  we classify all simple non-singular
 Whittaker $W_n$-modules with finite dimensional Whittaker vector spaces using $\mathfrak{gl}_n$-modules.  We also obtain an analogue of Skryabin's equivalence for  the non-singular block $\Omega_{\mathbf{a}}^W$.
 \end{abstract}
\vskip 10pt \noindent {\em Keywords:}  Whittaker module, Smash product, weighting functor, Weyl algebra, Skryabin's equivalence.

\vskip 5pt
\noindent
{\em 2020  Math. Subj. Class.:}
17B10, 17B20, 17B65, 17B66, 17B68

\vskip 10pt

\section{Introduction}

Whittaker modules are important objects in the representation theory since
its emergence in the study of complex  semisimple  Lie algebras. Let $\mg=\mn_-\oplus \mh \oplus \mn_+$ be a complex semisimple  Lie algebra and let $\eta:\mn_+\rightarrow \C$ be a Lie algebra homomorphism. A $\mg$-module is called a Whittaker module if $x-\eta(x)$ acts locally nilpotent on it for any $x\in \mn_+$.  Whittaker modules were first defined for $\sl_2$ by Arnal and Pinzcon \cite{AP}.  Kostant \cite{K}  defined
Whittaker modules for all  complex semisimple Lie algebras. When $\eta$ is non-singular, Kostant showed that simple  Whittaker modules are in bijective
correspondence with maximal ideals of the center $Z(\mg)$ of the universal enveloping algebra of $\mg$. For quantum groups, Whittaker modules have been studied  in \cite{O1,S,XGZ}. Whittaker modules have also been studied for
the Virasoro algebra in \cite{OW1,OW2,GLZ,LPX,LWZ}, for Heisenberg
algebras in \cite{Ch}, for Weyl algebras and generalized Weyl algebras in \cite{BO}, for  affine Lie algebras  in \cite{ALZ, CJ} and for classical Lie-super algebras  in \cite{C}.  In \cite{BM}, the authors  gave a more general picture of Whittaker modules for many Lie algebras which includes, in particular, Lie algebras with triangular decomposition and simple Lie algebras of Cartan type. They described some basic properties of Whittaker modules, including a block decomposition of the category of Whittaker modules and some properties of simple Whittaker modules under some assumptions.

Among the theory of infinite dimensional Lie algebras,
  the Lie algebras   of  polynomial vector fields on irreducible affine algebraic varieties  is an important class of Lie
algebras. These algebras contain the  Cartan type Lie algebra $W_n$ which is the derivation Lie algebra of $A_n=\C[t_1,\dots, t_n]$. Weight modules with finite dimensional weight spaces (also called Harish-Chandra modules) for $W_n$ has been well developed.  In 1974-1975,  A. Rudakov
 addressed the classification of a class of irreducible $W_n$-representations
that satisfy some natural topological conditions, see \cite{R1,R2}.
Tensor modules $T(P, V) $ over $W_n$ were introduced by Shen and Larsson, see \cite{Sh,La}, for a $\cd_n$-module $P$ and $\gl_n$-module V, where $\cd_n$ is the Weyl algebra. In \cite{LLZ}, the structure of any $T(P, V ) $ was completely determined. When $V$ is finite dimensional, the simplicity   of $T(P, V ) $ over the Witt algebra
on an $n$-dimensional torus  was given by Rao, see \cite{E1}.
The
classification of simple Harish-Chandra modules over $W_1$ was given in \cite{M1}. Very recently,
based on the classification of simple uniformly bounded $W_n$ modules (see \cite{CG} for $n =2$  and \cite{XL1}
for any $n$) and the description  of the weight set of simple weight $W_n$ modules
(see \cite{PS}), D. Grantcharov and V. Serganova completed the classification of simple Harish-Chandra $W_n$-modules, see \cite{GS}. They proved that every such nontrivial module $M$ is a simple sub-quotient of some tensor module $T(P, V) $.
In \cite{DSY}, tilting modules and their character formulas for category $\mathcal{O}$ of $W_n$ were  described.
However non-weight modules and weight modules that are not
Harish-Chandra modules for $W_n$ are not well developed.

 In this paper, we study the category of  Whittaker  modules for $W_n$.
 Let
 $\Delta_n=\text{Span}\{\frac{\partial}{\partial{t_1}},\dots,\frac{\partial}{\partial{t_n}}\}$ which is a commutative subalgebra of $W_n$.
Then $(W_n, \Delta_n)$ is a Whittaker pair in the sense of \cite{BM}.
A $W_n$-module $M$ on which $\Delta_n$ operates locally finite  is called a Whittaker module. We denote the semidirect Lie algebra $W_n\ltimes A_n$ by $\widetilde{W}_n$.  A $\widetilde{W}_n$-module $M$ is called an $(A_n, W_n)$-module, if the action of  $A_n$  on $M$ is associative.
 This  kind of modules play an important role in the classification of
simple modules for Cartan type Lie algebras, see \cite{B,BF1,GLLZ,E2, XL1}. For an $\ba\in \C^n$ and a Whittaker $W_n$-module $M$, a nonzero  $v\in M$ is called a Whittaker vector if $\frac{\partial}{\partial{t_i}}v=a_i v $ for any $i\in\{1,\cdots,n\}$.
Let $\Omega_{\mathbf{a}}^W$ (resp. $\Omega_{\mathbf{a}}^{\widetilde{W}}$) be the category consisting of $W_n$-modules (resp. $(A_n, W_n)$-modules) $M $ on which $\frac{\partial}{\partial{t_i}}-a_i$ acts locally nilpotent for any $i\in\{1,\cdots,n\}$ and having finite dimensional Whittaker vector spaces. We show that each block $\Omega_{\mathbf{a}}^{\widetilde{W}}$  is equivalent to the category of finite dimensional modules over $L_n$, where $L_n=\text{Span}\{f(t)\frac{\partial}{\partial{t_i}}\mid f(t)\in A_n, f(0)=0, i=1,\dots,n\}$, see Theorem \ref{cat-equ}. When $\ba\in (\C^*)^n$, $\Omega_{\mathbf{a}}^W$ is said to be non-singular. We show that any module in  the non-singular block $\Omega_{\mathbf{a}}^W$ is a free
$U(\mh_n)$-module of finite rank when restricted to $U(\mh_n)$, where
$\mh_n$ is a Cartan subalgebra of $W_n$. Using the covering method introduced in \cite{BF1}, we show that any simple module in the non-singular block  $\Omega_{\mathbf{a}}^W$ is isomorphic to some simple sub-quotient of $T(A_n^{\ba}, V)$ for some finite dimensional simple  $\gl_n$-module $V$, see Theorem \ref{c-t}.   We also define the universal Whittaker module  $Q_{\ba}=U(W_n)\otimes_{U(\Delta_n)} \C_{\ba}$ and its endomorphism algebra $H_{\ba}=\text{End}_{W_n}(Q_{\ba})^{\text{op}}$. When $\ba\in (\C^*)^n$, we show that   $\Omega_{\mathbf{a}}^W$ and the category of finite dimensional $H_{\ba}$-modules are equivalent, see Theorem \ref{s-e}. A similar equivalence for finite $W$-algebras $W(\mg, e)$ was shown by Serge Skryabin, see the appendix in \cite{P}.

We denote by $\Z$, $\N$, $\Z_{\geq 0}$, $\C$ and $\C^*$ the sets of integers, positive integers, nonnegative
integers, complex numbers, and nonzero complex numbers, respectively.  For a Lie algebra
$L$ over $\C$, we use $U(L)$ to denote the universal enveloping algebra of $L$.

\section{Preliminaries}

\subsection{Witt algebras $W_n$}
Throughout this paper, $n$ denotes a positive integer.
We fix the vector space $\mathbb{C}^n$ of $n$-dimensional complex vectors.
Denote its standard basis by $\{\be_1,\be_2,\dots,\be_n\}$.

Denote by $A_n=\C[t_1,t_2,\cdots,t_n]$ the polynomial algebra in the commuting variables $t_1,t_2,\cdots,t_n$.
Let $\p_{i}=\frac{\p}{\p{t_i}}, h_i=t_i\partial_i$ for any $i=1, 2, \cdots, n$.
For the convenience,  set
$t^\bm=t_1^{m_1}\cdots t_n^{m_n}$, $h^{\bm}=h_1^{m_1}\cdots h_n^{m_n}$ and $|\bm|=m_1+\cdots+m_n$ for any $\bm=(m_1,\dots,m_n)\in\Z_{\geq 0}^n$.
Recall that  the classical Witt algebra $ W_n=\text{Der}(A_n)=\oplus_{i=1}^n{ A}_n\p_{i}$ is a free $A_n$-module of rank $n$. It  has the following Lie bracket:
$$[t^{\bm}\p_{i},t^{\br}\p_{j}]=
r_it^{\bm+\br-\be_i}\p_{j}
-m_jt^{\bm+\br-\be_j}\p_{i},$$
where all $\bm,\br\in \Z_{\geq 0}^n$.

It is well-known that  $W_n$ is a simple Lie algebra.
The subspace $\mathfrak{h}_n=\mathbb{C} h_1\oplus \cdots \oplus \mathbb{C} h_n $ is a Cartan subalgebra (a maximal
abelian subalgebra that is diagonalizable on $W_n$ with respect to the
adjoint action) of $W_n$.

Note that $A_n$ is a $W_n$-module under the derivation action of $W_n$. We denote the semidirect Lie algebra $W_n\ltimes A_n$ by $\widetilde{W}_n$ which is called the extended Witt algebra. Let $W_n$-$\text{Mod}$  be the category of finitely generated left $U(W_n)$-modules.

\begin{definition}A $W_n$-module $M$ is called a weight module if the action of $\mh_n$ on $M$
is diagonalizable, i.e.,
$M=\bigoplus_{\lambda \in \mh_n^{*}}M_{\lambda},$ where
$$M_{\lambda}=\{v \in M\mid hv=\lambda(h)v,  \forall \ h \in \mh_n\}.$$
\end{definition}

A weight $W_n$-module $M$ is  uniformly bounded  if the dimension of each weight space $M_{\lambda}$ is smaller than a fixed integer.
\begin{definition}A module $M$ over $\widetilde{W}_n$ is called an $(A_n,W_n)$-module if the action  of $A_n$ on $M$ is associative, i.e., $g(fv)=(gf)v$, for any $f,g\in A_n, v\in M$.
\end{definition}

\begin{example}\begin{enumerate}[$($a$)$]
\item The natural module $A_n$ is an $(A_n,W_n)$-module.
\item The adjoint $W_n$-module $W_n$ is an $(A_n,W_n)$-module under the following action: $$f(t)\cdot g(t)\p_i=(f(t)g(t))\p_i,\  \ f(t),g(t)\in A_n,$$
    where $i\in \{1,\dots,n\}$.
\end{enumerate}
\end{example}
\subsection{Whittaker modules}

Denote $\Delta_n=\span\{\p_1,\dots,\p_n\}$ which is an abelian subalgebra of $W_n$.
Since the adjoint action of $\Delta_n$ on $ W_n/\Delta_n$ is locally nilpotent. Thus $(W_n,\Delta_n)$ is a Whittaker pair in the sense of  \cite{BM}. A $W_n$-module $M$ is called a   Whittaker module if
the action of $\Delta_n$ on $M$ is locally finite.
For an $\mathbf{a}=(a_1,\cdots,a_n)\in \C^n$, we can define a
Lie algebra homomorphism $\phi_{\mathbf{a}}: \Delta_n\rightarrow \C$ such that $\phi_{\mathbf{a}}(\p_i)=a_i$ for any $i\in \{1,\cdots,n\}$. A   Whittaker module $M$ is of type $\phi_{\mathbf{a}}$ if for any $v\in M$ there is a $k\in \N$ such that $(x-\phi_{\mathbf{a}}(x))^kv=0$ for all $x\in \Delta_n$.
We also define the subspace of $M$ as follows:
 $$\text{Wh}_{\mathbf{a}}(M)=\{v\in M \mid x v=\phi_{\mathbf{a}}(x)v, \ \forall\ x\in \Delta_n \}$$ of $M$.
 An element in $\text{Wh}_{\mathbf{a}}(M)$ is called a Whittaker vector.

 We consider  Whittaker modules under some natural finite condition.  Let $\Omega_{\mathbf{a}}^W$ be the category consisting of   Whittaker $W_n$-modules of type $\phi_{\mathbf{a}}$ such that $\dim \text{Wh}_{\mathbf{a}}(M)< +\infty $. Similarly let $\Omega_{\mathbf{a}}^{\widetilde{W}}$ be the category consisting of Whittaker $(A_n, W_n)$-modules of type $\phi_{\mathbf{a}}$ such that $\dim \text{Wh}_{\mathbf{a}}(M)< +\infty $.

For a vector $\ba\in \C^n$, we call it non-singular if $a_i\neq 0$ for any $i\in \{1,\cdots,n\}$.

\subsection{Weighting functor}
In this subsection, we recall the weighting functor introduced in
\cite{N2}, which  establishes a relationship between weight modules and non-weight modules.
For an $\br=(r_1,\dots,r_n)\in\C^n$,  let $I_\br$ be the maximal ideal of  $U(\mh_n)$ generated by $$h_1-r_1,\dots, h_n-r_n.$$
For any $W_n$-module $M$ and  $\br\in\C^n$, set $M^{\br}:= M/I_{\br}M$.
Denote $$\mathfrak{W}(M):=\bigoplus_{\br\in\Z^n}M^{\br}.$$

By Proposition 8 in \cite{N2}, we have the following construction.
\begin{proposition} The vector space $\mathfrak{W}(M)$  becomes a weight $W_n$-module   under the following action:
\begin{equation}\label{3.3}
 t^{\bm+\be_i}\p_i\cdot(v+I_{\br}M):=  t^{\bm+\be_i}\p_i v+I_{\br+\bm}M, v\in M, \br\in \Z^n,
\end{equation} where $\bm\in \Z_{\geq 0}^n$ with $m_i\geq -1$.
\end{proposition}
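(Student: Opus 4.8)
The plan is to verify three things: that formula \eqref{3.3} is well defined on each quotient $M^{\br}=M/I_{\br}M$, that it satisfies the defining relations of the $W_n$-action, and that the resulting module is a weight module whose weight spaces are the $M^{\br}$. The computation underlying all three is the $\mh_n$-weight of the basis vector $t^{\bm+\be_i}\p_i$. Using the Lie bracket of $W_n$ recalled above, one checks directly that
\[
[h_l,\,t^{\bm+\be_i}\p_i]=m_l\,t^{\bm+\be_i}\p_i \qquad (1\le l\le n),
\]
so $t^{\bm+\be_i}\p_i$ is homogeneous of weight $\bm$. This is precisely why \eqref{3.3} shifts the index from $\br$ to $\br+\bm$, and it is the identity I would record first.

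First I would establish well-definedness. Since $U(\mh_n)$ is commutative, $I_{\br}M=\sum_{l=1}^{n}(h_l-r_l)M$, so it suffices to show that $X:=t^{\bm+\be_i}\p_i$ maps each $(h_l-r_l)M$ into $I_{\br+\bm}M$. Writing $Xh_l=h_lX+[X,h_l]=h_lX-m_lX$ and using the identity above, for any $v\in M$ one gets
\[
X\bigl((h_l-r_l)v\bigr)=(h_l-r_l)Xv+[X,h_l]v=\bigl(h_l-(r_l+m_l)\bigr)Xv\in I_{\br+\bm}M,
\]
because $h_l-(r_l+m_l)$ is one of the generators of $I_{\br+\bm}$. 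Hence $X\,I_{\br}M\subseteq I_{\br+\bm}M$ and \eqref{3.3} descends to the quotients. I expect this to be the main (albeit short) obstacle, since it is exactly where the index shift and the commutation relation have to match.

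Then I would check the Lie-algebra relations. Fixing $u=v+I_{\br}M$ and setting $X=t^{\bm+\be_i}\p_i$, $Y=t^{\bs+\be_k}\p_k$ of weights $\bm,\bs$, a double application of \eqref{3.3} that tracks the two shifts gives
\[
X(Yu)-Y(Xu)=(XY-YX)v+I_{\br+\bm+\bs}M=[X,Y]v+I_{\br+\bm+\bs}M.
\]
Since $[X,Y]$ is a linear combination of basis vectors all of weight $\bm+\bs$, formula \eqref{3.3} computes $[X,Y]\cdot u=[X,Y]v+I_{\br+\bm+\bs}M$, so the two sides coincide and the prescription is a genuine $W_n$-action. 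Finally, taking $\bm=\bo$, i.e. $X=h_j$, formula \eqref{3.3} together with $(h_j-r_j)v\in I_{\br}M$ yields $h_j\cdot u=h_jv+I_{\br}M=r_jv+I_{\br}M=r_ju$; thus $h_j$ acts on $M^{\br}$ by the scalar $r_j$, each $M^{\br}$ lies in a single weight space (distinct $\br\in\Z^n$ giving distinct weights), and $\mathfrak{W}(M)=\bigoplus_{\br\in\Z^n}M^{\br}$ is a weight $W_n$-module as claimed.
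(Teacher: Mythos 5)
Your proof is correct. One point of comparison is essential to note: the paper does not prove this proposition at all --- it is imported wholesale by the citation ``By Proposition 8 in \cite{N2}'', Nilsson's general weighting-functor construction, so there is no internal argument to measure your proof against. What you have written is, in effect, the specialization of Nilsson's general argument to $W_n$, and it is complete: everything hinges on the identity $[h_l,\,t^{\bm+\be_i}\p_i]=m_l\,t^{\bm+\be_i}\p_i$, i.e.\ that the spanning monomials of $W_n$ are $\ad(\mh_n)$-weight vectors of weight $\bm$, and from it you correctly deduce (i) well-definedness, via $X\bigl((h_l-r_l)v\bigr)=\bigl(h_l-(r_l+m_l)\bigr)Xv$, so $X\,I_{\br}M\subseteq I_{\br+\bm}M$; (ii) compatibility with the bracket, using that $[X,Y]$ is again a combination of monomials of weight $\bm+\bs$ so that both sides of the module axiom land in $M^{\br+\bm+\bs}$ and agree there (checking on the monomial basis suffices, since the action is defined by linear extension); and (iii) that $h_j$ acts on $M^{\br}$ by the scalar $r_j$, which makes $\mathfrak{W}(M)=\bigoplus_{\br\in\Z^n}M^{\br}$ a weight module. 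In Nilsson's paper the same three steps are carried out for an arbitrary Lie algebra equipped with a commutative subalgebra acting diagonalizably in the adjoint representation; the citation buys generality, while your write-up buys a self-contained, elementary verification tailored to $W_n$ --- a reasonable thing to include, since the paper's statement (with its slightly abusive condition ``$\bm\in\Z_{\geq 0}^n$ with $m_i\geq -1$'', which you interpret correctly as $t^{\bm+\be_i}$ having nonnegative exponents) is otherwise left entirely to the reader.
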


In Section 4, we will use the  weighting functor to show that the $(A_n, W_n)$-cover $\widehat{M}$ has a finite composition series as an $(A_n, W_n)$-module for any module $M$ in  $\Omega_{\mathbf{a}}^W$ when $\ba$ is non-singular.

\section{Whittaker $(A_n,W_n)$-modules}

In this section, we study Whittaker $(A_n,W_n)$-modules. Explicitly, we establish an equivalence between $\Omega_{\mathbf{a}}^{\widetilde{W}}$ and
the category of finite dimensional $L_n$-modules.

\subsection{Whittaker modules over $\mathcal{D}_n$}
Let $\mathcal{D}_{n}$ be the  Weyl algebra
 over $A_n$ which is the unital  associative algebra
over $\mathbb{C}$ generated by $t_1,\dots,t_n$,
$\partial_1,\dots,\partial_n$ subject to
the relations
$$[\partial_i, \partial_j]=[t_i,t_j]=0,\qquad [\partial_i,t_j]=\delta_{i,j},\ 1\leq i,j\leq n.$$
A $\mathcal{D}_n$-module $M$ is called a Whittaker module of type $\ba$ if for any $v\in M$ there is a $k\in \N$ such that $(\p_i-a_i)^kv=0$ for all $i\in \{1,\cdots,n\}$.
Let $\mathcal{V}_{\mathbf{a}}^D$ be the category consisting of   finite length Whittaker $\mathcal{D}_n$-modules of type $\mathbf{a}$.

Let $\sigma_{\ba}$ be the algebra automorphism of $\mathcal{D}_n$ defined by $$t_i\mapsto t_i, \p_i \mapsto \p_i+a_i.$$
The $\mathcal{D}_n$-module $A_n$ can be twisted by $\sigma_{\ba}$ to be a new $\mathcal{D}_n$-module $A_n^{\ba}$.  Explicitly the the action of
$\mathcal{D}_n$ on  $A_n^{\ba}$ is defined by
$$X\cdot f(t)=\sigma_{\ba}(X)f(t), \ \ f(t)\in A_n, X\in \mathcal{D}_n.$$

\begin{lemma}\label{semi-simple}
\begin{enumerate}[$($a$)$]
\item Any simple module in $\mathcal{V}_{\mathbf{a}}^D$ is isomorphic to $A_n^{\ba}$.
\item The category $\mathcal{V}_{\mathbf{a}}^D$  is semi-simple.
\end{enumerate}
\end{lemma}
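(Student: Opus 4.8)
The plan is to prove both parts together by analyzing the structure of a Whittaker $\mathcal{D}_n$-module of type $\ba$ via the commutative subalgebra generated by $\p_1,\dots,\p_n$. The key observation is that $\mathcal{D}_n$ is generated by $t_1,\dots,t_n,\p_1,\dots,\p_n$, and the operators $\p_i$ act locally nilpotently after the shift $\p_i\mapsto\p_i-a_i$. Since $A_n^{\ba}$ is the twist of the natural module $A_n$ by $\sigma_{\ba}$, and $A_n$ is the well-known simple $\mathcal{D}_n$-module on which $\p_i$ acts nilpotently with cyclic generator $1$ (annihilated by all $\p_i$), the twisted module $A_n^{\ba}$ has a Whittaker vector $1$ satisfying $\p_i\cdot 1=a_i$. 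The simplicity of $A_n^{\ba}$ follows from the simplicity of $A_n$ as a $\mathcal{D}_n$-module, since $\sigma_{\ba}$ is an algebra automorphism.

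First I would prove part (a). Let $M$ be a simple module in $\mathcal{V}_{\mathbf{a}}^D$. The defining condition guarantees the space $\text{Wh}_{\ba}(M)=\{v\in M\mid \p_i v=a_iv,\ \forall i\}$ is nonzero: by finite length the operators $\p_i-a_i$ are locally nilpotent, and a standard argument (passing to a simultaneous generalized eigenspace and using that the $\p_i$ commute) produces a common eigenvector, i.e.\ a Whittaker vector $w$. Twisting by $\sigma_{\ba}^{-1}$, equivalently, I may assume $\ba=\bo$ and find a vector killed by all $\p_i$. Then the cyclic submodule $\mathcal{D}_n\cdot w$ is, by the PBW basis $t^{\bm}\p^{\bs}$ of $\mathcal{D}_n$, spanned by $\{t^{\bm}w\}$, and the map $t^{\bm}\mapsto t^{\bm}w$ realizes a surjection $A_n^{\ba}\twoheadrightarrow M$ of $\mathcal{D}_n$-modules. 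Since $A_n^{\ba}$ is simple, this is an isomorphism.

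Next I would deduce part (b). The natural strategy is to show that every module $M\in\mathcal{V}_{\mathbf{a}}^D$ is a direct sum of copies of $A_n^{\ba}$, which is equivalent to semisimplicity given part (a). It suffices to prove that $\text{Ext}^1_{\mathcal{D}_n}(A_n^{\ba},A_n^{\ba})=0$ within the category, i.e.\ every short exact sequence $0\to A_n^{\ba}\to E\to A_n^{\ba}\to 0$ splits; or more directly, one argues by induction on length that any $M$ is generated by its Whittaker vectors and that $\dim\text{Wh}_{\ba}(M)$ equals the length. The cleanest route is: after twisting to $\ba=\bo$, show $M$ equals the sum of its $A_n$-type submodules by checking that every vector lies in a finite-dimensional $\p_i$-stable subspace and lifting Whittaker vectors through the nilpotent filtration using that $[\p_i,t_j]=\delta_{ij}$ lets one solve the relevant recursion.

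The main obstacle I expect is establishing the semisimplicity in part (b) rigorously, specifically the splitting of extensions. The delicate point is that although $A_n^{\ba}$ is simple, $A_n^{\ba}$ is infinite dimensional, so one cannot invoke finite-dimensional representation theory directly; instead one must exploit the explicit Weyl algebra relations. Concretely, given an extension, one takes a Whittaker vector $\bar{v}$ in the quotient, lifts it to some $v\in E$ with $(\p_i-a_i)v\in A_n^{\ba}$, and then corrects $v$ by an element of $A_n^{\ba}$ to produce a genuine Whittaker vector in $E$; solvability of this correction amounts to showing the image of $\p_i-a_i$ on $A_n^{\ba}$ behaves compatibly, which follows because on $A_n$ the operators $\p_i$ have the single common kernel $\C\cdot 1$ and their joint cokernel vanishes in the appropriate sense. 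Carrying out this lifting simultaneously for all $i$, using commutativity of the $\p_i$, is the technical heart of the argument.
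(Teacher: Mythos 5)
Your proposal is correct, and part (a) is essentially the paper's argument: the paper also shows $A_n^{\ba}$ is simple (by induction on degree, using $(\p_i-a_i)\cdot t^{\bm}=m_it^{\bm-\be_i}$) and then identifies any simple object of $\mathcal{V}_{\mathbf{a}}^D$ with $A_n^{\ba}$ through a Whittaker vector; your variant, mapping \emph{out of} $A_n^{\ba}$ by its universal property $A_n^{\ba}\cong \mathcal{D}_n/\sum_i\mathcal{D}_n(\p_i-a_i)$ rather than into it, is if anything the cleaner formulation, since it avoids having to check that the vectors $t^{\bm}v$ are linearly independent. Where you genuinely diverge is part (b). The paper does not argue by hand: it uses the twist $\sigma_{\ba}$ to reduce to the untwisted case and then cites Lemma 3 of \cite{LZ2} for $\mathrm{Ext}^1_{\mathcal{D}_n}(A_n,A_n)=0$, whence semisimplicity follows because, by (a), $A_n^{\ba}$ is the only simple object. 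You instead propose a self-contained splitting argument: lift the Whittaker vector $1$ of the quotient to $v\in E$, set $f_i=(\p_i-a_i)v\in A_n^{\ba}$, and correct $v$ by a $g$ solving $(\p_i-a_i)g=f_i$. To complete this you must make precise what you only gesture at (``the joint cokernel vanishes in the appropriate sense''): after untwisting to $\ba=\bo$, commutativity of the $\p_i$ gives the integrability condition $\p_jf_i=\p_if_j$, and the polynomial Poincar\'e lemma --- a closed polynomial $1$-form on affine space is exact, proved by integrating $f_1$ with respect to $t_1$ and inducting on $n$ --- produces $g$. Then $w=v-g$ is a Whittaker vector of $E$, the submodule $\mathcal{D}_nw\cong A_n^{\ba}$ maps isomorphically onto the quotient, the sequence splits, and induction on length gives semisimplicity of the whole category. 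Your route buys an elementary, self-contained proof (in effect a direct verification of the de Rham--type Ext-vanishing that the paper imports), at the cost of having to supply the Poincar\'e lemma step that the citation hides; the paper's route is shorter but rests on an external lemma.
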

\begin{proof}(a) For any $f(t)\in A_n^{\ba}$, the formula  $(\p_i-a_i)\cdot t^{\bm}=m_it^{\bm-\be_i}$ tells us that
the degree of $t_i$ in $(\p_i-a_i)f(t)$ is smaller than that of $t_i$ in $f(t)$. By induction on the degree of $f(t)$, any submodule of $A_n^{\ba}$
must contain $1$ which generates $A_n^{\ba}$. So $A_n^{\ba}$ is a simple
$\mathcal{D}_n$-module. If $V$ is a simple module in $\mathcal{V}_{\mathbf{a}}^D$, then there is a nonzero $v\in V$ such that $\p_iv=a_iv$ for any $i$. Then the map $V\rightarrow A_n^{\ba}, t^{\bm}v\mapsto t^{\bm}$ defines a  $\mathcal{D}_n$-module isomorphism between
$V$ and $A_n^{\ba}$.

(b) Since $A^{\mathbf{a}}_{n}$ is equivalent to $A_n$ up to the algebra automorphism $\sigma_{\ba}$ of $\cd_n$ defined by $t_i\mapsto t_i, \p_i \mapsto \p_i+a_i$, $\text{Ext}^1_{\mathcal{D}_n}(A_n^{\ba}, A_n^{\ba})\simeq\text{Ext}^1_{\cd_n}(A_n, A_n)$.  By Lemma 3 in \cite{LZ2}, $\text{Ext}^1_{\cd_n}(A_n, A_n)=0$. So $\mathcal{V}_{\mathbf{a}}^D$  is semi-simple.
\end{proof}

\subsection{ The Whittaker category $\Omega_{\mathbf{a}}^{\widetilde{W}}$}

Since  $A_n$ is a left  module algebra over the Hopf algebra $U(W_n)$, we have the smash product algebra $A_n\# U(W_n)$. An $(A_n,W_n)$-module is actually a module
over $A_n\# U(W_n)$.

Let $L_n$ be the Lie subalgebra of $W_n$ spanned by $t^{\bm}\p_i$,  $i=1,\dots,n,  \bm\in\Z^n_{\geq 0}$ with $|\bm|\geq 1$ which is called the jet Lie algebra of $W_n$.

The following isomorphism was given   in \cite{XL1} or  \cite{BIN}.
\begin{theorem}\label{mainth}
The linear map $\phi:A_n\#U(W_n)\rightarrow \mathcal{D}_n\otimes U(L_n)$ defined by
$$\aligned\phi(t^{\bm}\partial_k)&=
t^{\bm}\partial_k \otimes 1+\sum_{\br\neq 0,\atop r_i\leq m_i}\binom{\bm}{\br} t^{\bm-\br} \otimes t^{\br}\partial_k,\\
\phi(t^{\bm})&=t^{\bm}\otimes 1, \ \ \  \bm\in\Z_{\geq 0}^n, k\in \{1,\cdots,n\},\endaligned$$  is an associative algebra isomorphism,
where $\binom{\bm}{\br}=\binom{m_1}{r_1}\cdots \binom{m_n}{r_n}$.
\end{theorem}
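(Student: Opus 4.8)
The plan is to first check that $\phi$ respects the defining relations of the smash product, so that it is a well-defined algebra homomorphism, and then to establish bijectivity through a filtration argument. Recall that $A_n\#U(W_n)$ is the associative algebra generated by $A_n$ and $W_n$ modulo the commutative relations of $A_n$, the Lie relations $XY-YX=[X,Y]$ of $W_n$, and the smash (cross) relations $Xf-fX=X(f)$ for $X\in W_n$ and $f\in A_n$, where $X(f)$ denotes the derivation action. Hence it suffices to verify in $\cd_n\otimes U(L_n)$ the three families of identities: (i) $\phi(t^{\bm})\phi(t^{\br})=\phi(t^{\bm+\br})$; (ii) $[\phi(t^{\br}\p_j),\phi(t^{\bm})]=\phi(m_j t^{\bm+\br-\be_j})$; and (iii) $[\phi(t^{\bm}\p_i),\phi(t^{\br}\p_j)]=\phi([t^{\bm}\p_i,t^{\br}\p_j])$.

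Identity (i) is immediate because $\phi$ maps $A_n$ into $\cd_n\otimes 1$ and $t^{\bm}\mapsto t^{\bm}\otimes 1$ is multiplicative. For (ii) I would observe that $\phi(t^{\bm})=t^{\bm}\otimes 1$ lies in $\cd_n\otimes 1$, so the tensor factors $t^{\br-\bs}\otimes t^{\bs}\p_j$ of $\phi(t^{\br}\p_j)$ commute with it, since polynomials in the $t$'s commute in $\cd_n$; only the summand $t^{\br}\p_j\otimes 1$ contributes, and the Weyl-algebra commutator $[t^{\br}\p_j,t^{\bm}]=m_j t^{\bm+\br-\be_j}$ gives exactly the right-hand side.

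The main obstacle is (iii), preservation of the Lie bracket. Expanding the bracket of two decomposable tensors in $\cd_n\otimes U(L_n)$ splits it into a part of the form (Weyl-algebra commutator)$\otimes$($U(L_n)$-product) and a part (product in $\cd_n$)$\otimes$($L_n$-commutator). I would organise the verification by collecting, for each monomial $t^{\mathbf{p}}\otimes t^{\mathbf{q}}\p_\ell$, the contributions coming from the $\p$'s acting on the $t$-powers in the $\cd_n$-factor together with those coming from the $L_n$-brackets $[t^{\bs}\p_i,t^{\mathbf{u}}\p_j]$, and check that they reassemble into $\phi$ applied to $r_i t^{\bm+\br-\be_i}\p_j-m_j t^{\bm+\br-\be_j}\p_i$. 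The products of binomial coefficients $\binom{\bm}{\bs}\binom{\br}{\mathbf{u}}$ recombine via Vandermonde-type identities; this bookkeeping, including the cancellations between the two parts, is the technical heart of the proof and the place where index and sign errors are most likely.

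For bijectivity I would filter both algebras by $\p$-order: in the source assign $A_n$ order $0$ and $W_n$ order $1$, while in the target a monomial $t^{\bs}\p^{\mathbf{q}}\otimes u$ has order $|\mathbf{q}|+\deg_{U(L_n)}u$. Since every term of $\phi(t^{\bm}\p_k)$ contains exactly one $\p$, the map $\phi$ is filtered, and the two associated graded algebras are the commutative polynomial rings $\mathrm{gr}(A_n\#U(W_n))\cong A_n\otimes S(W_n)$ and $\mathrm{gr}(\cd_n\otimes U(L_n))\cong A_n[\bar\p_1,\dots,\bar\p_n]\otimes S(L_n)$. The induced map $\mathrm{gr}\,\phi$ fixes $A_n$, sends $w_{\p_k}\mapsto\bar\p_k$, and sends the generator $w_{t^{\bm}\p_k}$ (for $|\bm|\geq 1$) to $\ell_{t^{\bm}\p_k}+t^{\bm}\bar\p_k+\sum\binom{\bm}{\bs}t^{\bm-\bs}\ell_{t^{\bs}\p_k}$, the sum taken over $\bo\neq\bs\leq\bm$ with $\bs\neq\bm$. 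As the index set $(\bm,k)$ of the source generators matches that of the target generators $\{\bar\p_k\}\cup\{\ell_{t^{\bs}\p_k}:|\bs|\geq 1\}$, and every off-diagonal term involves only indices of strictly smaller $|\bs|$, this is a unitriangular $A_n$-linear change of the polynomial variables; hence $\mathrm{gr}\,\phi$ is an isomorphism. Because both filtrations are exhaustive and vanish in negative degrees, an isomorphism on the associated graded lifts to an isomorphism of filtered algebras, so $\phi$ is an algebra isomorphism as claimed.
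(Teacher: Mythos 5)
The first thing to note is that the paper contains no proof of Theorem \ref{mainth} at all: it is imported from \cite{XL1} and \cite{BIN} (``The following isomorphism was given in...''), so your proposal is not competing with an argument in the text and has to stand on its own. Most of it does stand. The presentation of $A_n\#U(W_n)$ you invoke is correct --- since $W_n$ acts on the commutative algebra $A_n$ by derivations, the smash product is generated by $A_n$ and $W_n$ subject to the relations of $A_n$, the Lie relations of $W_n$, and $Xf-fX=X(f)$ --- so well-definedness does reduce to your identities (i)--(iii), and your verifications of (i) and (ii) are correct. Your bijectivity argument is also complete and correct: every summand of $\phi(t^{\bm}\p_k)$ has total order one, so $\phi$ is filtered; PBW together with the fact that $[W_n,A_n]\subseteq A_n$ drops filtration degree gives $\mathrm{gr}(A_n\#U(W_n))\cong A_n\otimes S(W_n)$, while $\mathrm{gr}(\cd_n\otimes U(L_n))\cong A_n[\bar\p_1,\dots,\bar\p_n]\otimes S(L_n)$; the induced map is a unitriangular change of polynomial variables with respect to the well-founded order by $|\bm|$, hence an isomorphism; and for exhaustive filtrations bounded below this forces $\phi$ itself to be bijective. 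This is a clean route, arguably better than matching PBW bases by hand.

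The genuine gap is (iii), and it is not a routine omission: it is the entire content of the theorem. You describe how you \emph{would} organize the computation and assert that the binomials ``recombine via Vandermonde-type identities,'' but nothing is computed. Concretely, write $\phi(t^{\bm}\p_i)=D_i^{\bm}+E_i^{\bm}$ with $D_i^{\bm}=t^{\bm}\p_i\otimes 1$ and $E_i^{\bm}=\sum_{\bo\neq\bs\leq\bm}\binom{\bm}{\bs}t^{\bm-\bs}\otimes t^{\bs}\p_i$. The bracket $[D_i^{\bm},D_j^{\br}]$ accounts for the ``$\otimes 1$'' part of $\phi([t^{\bm}\p_i,t^{\br}\p_j])$; what must then be shown is that $[D_i^{\bm},E_j^{\br}]+[E_i^{\bm},D_j^{\br}]+[E_i^{\bm},E_j^{\br}]$ reassembles into the remaining parts of $r_i\,\phi(t^{\bm+\br-\be_i}\p_j)-m_j\,\phi(t^{\bm+\br-\be_j}\p_i)$. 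Comparing coefficients of $t^{\bm+\br-\be_i-\mathbf{q}}\otimes t^{\mathbf{q}}\p_j$ (say for $i\neq j$), this amounts to the family of identities
$$(r_i-q_i)\binom{\br}{\mathbf{q}}\;+\;\sum_{\bs\neq\bo}\,(q_i+1-s_i)\binom{\bm}{\bs}\binom{\br}{\mathbf{q}+\be_i-\bs}\;=\;r_i\binom{\bm+\br-\be_i}{\mathbf{q}},$$
(with the convention that out-of-range binomials vanish), plus the companion family for the $t^{\mathbf{q}}\p_i$-components. These are indeed true Vandermonde-type identities, but note that the first term arises from a Weyl-algebra bracket while the sum arises from $L_n$-brackets: checking that the boundary terms, where a factor sits in $\cd_n$ rather than in $U(L_n)$, conspire correctly is precisely the non-routine part, and a sign or off-by-$\be_i$ error here would sink the theorem. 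A low-degree spot check (e.g.\ $[\phi(t^2\p),\phi(t\p)]=-\phi(t^2\p)$ for $n=1$) confirms the formula but does not substitute for the general verification. As written, your proposal is a correct, well-organized plan whose central step is missing. If you want lighter bookkeeping, build the inverse instead: the elements $\sum_{\bs\leq\bm}(-1)^{|\bs|}\binom{\bm}{\bs}\,t^{\bs}\cdot(t^{\bm-\bs}\p_k)$ of $A_n\#U(W_n)$ are the preimages of $1\otimes t^{\bm}\p_k$, and their commutation with $\cd_n$ is a check against the single generators $t_i,\p_i$; some bracket computation remains in either approach, but it cannot simply be waved at.
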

For any $\mathcal{D}_n$-module $P$, and $L_n$-module $V$, the tensor product $P\otimes V$ becomes an $(A_n,W_n)$-module $T(P,V)$ under the map
$\phi$.

The isomorphism in Theorem \ref{mainth} tells us that there is  a close relationship
between vector field Lie algebras and Weyl algebras.

Let $L_n\text{-mod}$ be the category of finite dimensional $L_n$-modules. When $V$ is finite dimensional,
we can check that $T(A_n^{\ba}, V)\in \Omega_{\mathbf{a}}^{\widetilde{W}}$ and the Whittaker vectors space $\text{Wh}_{\mathbf{a}}(T(A_n^{\ba}, V)) =V$.

\begin{lemma} \label{W-vector}For any $M\in \Omega_{\mathbf{a}}^{\widetilde{W}}$, the
Whittaker vector subspace $\text{Wh}_{\mathbf{a}}(M)$ is an $L_n$-module $V$ and $T(A_n^{\ba}, V)\cong M$.
\end{lemma}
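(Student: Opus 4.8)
The plan is to realize $M$ as a module over the tensor product algebra $B:=\cd_n\otimes U(L_n)$ and to exploit the fact that its two tensor factors commute. Via the isomorphism $\phi$ of Theorem \ref{mainth}, an $(A_n,W_n)$-module is exactly a $B$-module, so I regard $M$ as such. Specialising the formula for $\phi$ to $\bm=\bo$ shows that the correction sum is empty, whence $\phi(\p_i)=\p_i\otimes 1$; thus the Whittaker condition only involves the Weyl-algebra factor, $\text{Wh}_{\ba}(M)=\{v\in M:(\p_i\otimes 1)v=a_iv,\ \forall i\}$. Since $1\otimes U(L_n)$ commutes with $\cd_n\otimes 1$ inside $B$, each $1\otimes y$ with $y\in L_n$ preserves this subspace, which therefore carries an $L_n$-module structure $y\cdot v:=(1\otimes y)v$; call it $V:=\text{Wh}_{\ba}(M)$, finite dimensional by the defining hypothesis of $\Omega_{\ba}^{\widetilde{W}}$.

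Next I would write down the candidate map $\Phi\colon T(A_n^{\ba},V)=A_n^{\ba}\otimes V\to M$, $f\otimes v\mapsto (f\otimes 1)\cdot v$, where $f$ is read as the corresponding element of $A_n\subset\cd_n$, and check $B$-linearity on the generators $t_i\otimes 1$, $\p_i\otimes 1$, $1\otimes y$. The $t_i\otimes 1$ case is associativity of the $A_n$-action. The $\p_i\otimes 1$ case is the one genuine computation: expanding $\p_i f=f\p_i+\p_i(f)$ in $\cd_n$ and using $(\p_i\otimes 1)v=a_iv$ rewrites $(\p_i\otimes 1)\Phi(f\otimes v)$ as $\Phi\bigl(((\p_i+a_i)f)\otimes v\bigr)$, which is precisely the twisted $\cd_n$-action of $\p_i$ on $A_n^{\ba}$. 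The $1\otimes y$ case is immediate and is the real reason $\Phi$ is $L_n$-equivariant: in $B$ one has $(f\otimes 1)(1\otimes y)=f\otimes y=(1\otimes y)(f\otimes 1)$, so the two sides of the equivariance identity coincide on the nose. As $t_i\otimes 1,\ \p_i\otimes 1,\ 1\otimes y$ generate $B$, the linear map $\Phi$ is a $B$-module homomorphism, i.e.\ a homomorphism of $(A_n,W_n)$-modules.

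It remains to show $\Phi$ is bijective, which is a statement about the underlying $\cd_n$-module only: the structure theorem that a Whittaker $\cd_n$-module of type $\ba$ is $\cong A_n^{\ba}\otimes\text{Wh}_{\ba}(M)$ as a $\cd_n$-module. Applying the automorphism $\sigma_{\ba}$ reduces this to $\ba=\bo$, where $V=\bigcap_i\ker\p_i$ and $\Phi(t^{\bm}\otimes w)=t^{\bm}w$. For injectivity, in a relation $\sum_{\bm}t^{\bm}w_{\bm}=0$ pick $\bm^{\ast}$ with $|\bm^{\ast}|$ maximal among the nonzero $w_{\bm}$; applying $\p^{\bm^{\ast}}$ annihilates every term except $\bm=\bm^{\ast}$ and yields $(\bm^{\ast}!)\,w_{\bm^{\ast}}=0$, a contradiction. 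For surjectivity, observe $\operatorname{im}\Phi=A_n\cdot V$ is a $\cd_n$-submodule (since $\p_j(t^{\bm}w)=m_j t^{\bm-\be_j}w$), and suppose $M/\operatorname{im}\Phi\neq 0$. As the commuting $\p_i$ are locally nilpotent, any nonzero class spans a finite-dimensional $\C[\p_1,\dots,\p_n]$-module on which they act nilpotently, so $M/\operatorname{im}\Phi$ has a nonzero Whittaker vector $\bar w$; lift it to $w\in M$ with $\p_i w\in\operatorname{im}\Phi$ for all $i$. Identifying $\operatorname{im}\Phi\cong A_n\otimes V$ through the injectivity just proved, the $1$-form $\sum_i(\p_i w)\,dt_i$ is closed and hence exact by the polynomial Poincaré lemma, giving $u\in\operatorname{im}\Phi$ with $\p_i u=\p_i w$ for all $i$; then $w-u$ is a genuine Whittaker vector, so $w-u\in V\subseteq\operatorname{im}\Phi$, forcing $\bar w=0$, a contradiction.

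The main obstacle is precisely this last $\cd_n$-module structure theorem, i.e.\ the surjectivity of $\Phi$; the algebraic bookkeeping in the first two paragraphs is routine once the commuting tensor-factor viewpoint is adopted. Granting bijectivity, $\Phi$ is an isomorphism of $(A_n,W_n)$-modules and $M\cong T(A_n^{\ba},V)$, as claimed.
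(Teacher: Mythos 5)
Your proof is correct, and while its first half coincides with the paper's (both pass through Theorem \ref{mainth} to view $M$ as a $\cd_n\otimes U(L_n)$-module, use $[\cd_n,L_n]=0$ to put the $L_n$-structure on $\text{Wh}_{\ba}(M)$, and write down the same candidate map), your proof of bijectivity is genuinely different. The paper gets surjectivity by quoting its Lemma \ref{semi-simple} --- semisimplicity of the finite-length Whittaker category $\mathcal{V}^D_{\ba}$, which in turn rests on the vanishing of $\text{Ext}^1_{\cd_n}(A_n,A_n)$ cited from \cite{LZ2} --- and injectivity via the Jacobson density theorem applied to the simple module $A_n^{\ba}$. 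You instead twist by $\sigma_{\ba}$ to reduce to $\ba=\bo$, prove injectivity by a leading-term argument (apply $\p^{\bm^*}$ for $|\bm^*|$ maximal), and prove surjectivity by extracting a nonzero Whittaker vector from the quotient $M/\operatorname{im}\Phi$ (using that each vector generates a finite-dimensional $\C[\p_1,\dots,\p_n]$-module, which follows from the paper's definition of Whittaker module by polarizing the identity $(\sum_ic_iq_i)^kv=0$) and then correcting the lift via the polynomial Poincar\'e lemma. Your route buys two things: it is self-contained, replacing the external Ext-vanishing and the density theorem with direct computations; and it quietly repairs a terseness in the paper, since invoking Lemma \ref{semi-simple} strictly speaking requires first knowing that $M|_{\cd_n}$ has finite length (that is how $\mathcal{V}^D_{\ba}$ is defined), a point the paper does not verify --- your surjectivity argument needs no such finiteness hypothesis, and in fact never uses $\dim V<\infty$. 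What the paper's route buys is brevity: given Lemma \ref{semi-simple} as a black box, its proof is a few lines, and the Ext-vanishing it relies on is exactly the cohomological content that your Poincar\'e-lemma step proves by hand.
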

\begin{proof} Through the isomorphism $\phi$ in Theorem \ref{mainth}, we can view $M$ as a module over $\cd_n\otimes U(L_n)$. In $\cd_n\otimes U(L_n)$, $[\cd_n, L_n]=0$. So $\text{Wh}_{\mathbf{a}}(M)$ is an $L_n$-module.
We claim that the $\cd_n\otimes U(L_n)$-module homomorphism
$$\aligned \xi: A_n^{\ba}\otimes \text{Wh}_{\mathbf{a}}(M) &\rightarrow M,\\
 t^{\bm}\otimes v  &\mapsto t^{\bm} v, \ \bm\in \mathbb{Z}_{\geq 0}^n,
\endaligned$$ is an isomorphism. By  Lemma \ref{semi-simple}, $\xi$ is surjective. Suppose that $w=\sum_{\bs\in \Lambda} t^{\bs}\otimes v_{\bs}\in \ker \xi$, where $\Lambda $ is a finite subset of $\mathbb{Z}_{\geq 0}^n$.
For  any $\bm\in \Lambda$,
by the simplicity of $A_n^{\ba}$ and the density theorem, there is some
$X\in \cd_n$ such that $X t^{\bm} =1$ and $X t^{\bs}=0$ for any $\bs\in \Lambda$ with $\bs\neq \bm$.  Then $X\xi(w)=\xi(Xw)=\xi(1\otimes v_{\bm})=v_{\bm}=0$.  Consequently $w=0$. So $\xi$ is also injective. Therefore, $\xi$ is an isomorphism.
\end{proof}

In the following theorem, we show that each block $\Omega_{\mathbf{a}}^{\widetilde{W}}$ is equivalent to
$L_n\text{-mod}$.
\begin{theorem}\label{cat-equ}
 The functor
$$\aligned\mathcal{F}: L_n\text{-mod}&\rightarrow \Omega_{\mathbf{a}}^{\widetilde{W}},\\
V&\mapsto T(A_n^{\ba}, V),
\endaligned$$
 is an equivalence of the two categories.
\end{theorem}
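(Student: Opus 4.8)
The plan is to show that the functor $\mathcal{F}\colon V\mapsto T(A_n^{\ba},V)$ is an equivalence by exhibiting a quasi-inverse, namely the Whittaker-vector functor $\mathcal{G}\colon M\mapsto \text{Wh}_{\mathbf{a}}(M)$, and checking that the two composites are naturally isomorphic to the respective identity functors. Lemma \ref{W-vector} already does most of the heavy lifting for one direction: it tells me that for any $M\in\Omega_{\mathbf{a}}^{\widetilde{W}}$ the space $\text{Wh}_{\mathbf{a}}(M)$ is an $L_n$-module and that $\xi\colon T(A_n^{\ba},\text{Wh}_{\mathbf{a}}(M))\xrightarrow{\sim} M$ is an isomorphism, which is exactly the statement $\mathcal{F}\mathcal{G}\cong \mathrm{id}$. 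The remark preceding Lemma \ref{W-vector} gives the other composite: $\text{Wh}_{\mathbf{a}}(T(A_n^{\ba},V))=V$, so $\mathcal{G}\mathcal{F}\cong\mathrm{id}$ on objects.

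\emph{First} I would verify that $\mathcal{F}$ and $\mathcal{G}$ are genuinely functors, i.e. that they act sensibly on morphisms. For $\mathcal{F}$ this is automatic: a map of $L_n$-modules $f\colon V\to V'$ induces $\mathrm{id}\otimes f$ on $A_n^{\ba}\otimes V$, and since the $(A_n,W_n)$-action is transported through the fixed algebra isomorphism $\phi$ of Theorem \ref{mainth} (under which $\cd_n$ acts only on the first tensor factor and $L_n$ only on the second, as $[\cd_n,L_n]=0$), the map $\mathrm{id}\otimes f$ is $(A_n,W_n)$-linear. For $\mathcal{G}$, any $(A_n,W_n)$-morphism $g\colon M\to M'$ commutes with the $\Delta_n$-action, hence sends $\text{Wh}_{\mathbf{a}}(M)$ into $\text{Wh}_{\mathbf{a}}(M')$, and because $\text{Wh}_{\mathbf{a}}$ is cut out by the commuting $L_n$-action, the restriction $g|_{\text{Wh}_{\mathbf{a}}(M)}$ is $L_n$-linear. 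I should also note that $\mathcal{F}$ indeed lands in $\Omega_{\mathbf{a}}^{\widetilde{W}}$ (finite-dimensional Whittaker space, type $\phi_{\mathbf{a}}$) and $\mathcal{G}$ lands in $L_n\text{-mod}$ (finite-dimensionality being part of the definition of $\Omega_{\mathbf{a}}^{\widetilde{W}}$), so the functors have the right targets.

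\emph{Next} I would promote the object-level identifications to natural isomorphisms. On the one side, $\eta_V\colon V\xrightarrow{\sim}\text{Wh}_{\mathbf{a}}(T(A_n^{\ba},V))$ is the map $v\mapsto 1\otimes v$; naturality in $V$ is immediate since $\mathrm{id}\otimes f$ sends $1\otimes v$ to $1\otimes f(v)$. On the other side, the collection $\xi_M\colon T(A_n^{\ba},\text{Wh}_{\mathbf{a}}(M))\xrightarrow{\sim}M$ from Lemma \ref{W-vector} is the candidate for the counit, and I must check that $\xi$ is natural in $M$, i.e. that for every morphism $g\colon M\to M'$ the square relating $\xi_M,\xi_{M'}$ and $T(A_n^{\ba},g|_{\text{Wh}_{\mathbf{a}}})$ commutes; this follows by evaluating both paths on a generator $t^{\bm}\otimes v$ and using that $g$ is $A_n$-linear, so $g(t^{\bm}v)=t^{\bm}g(v)$.

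\emph{The main obstacle} I anticipate is not in the abstract bookkeeping but in making sure the $L_n$-module structures are matched correctly through $\phi$: one must confirm that the $L_n$-action that $\text{Wh}_{\mathbf{a}}(M)$ inherits as a subspace of the $\cd_n\otimes U(L_n)$-module $M$ coincides, under $\eta_V$, with the original $L_n$-action on $V$, and likewise that $\xi_M$ is $L_n$-equivariant and not merely $\cd_n$-equivariant. Both reduce to the explicit formula for $\phi$ in Theorem \ref{mainth} together with the key relation $[\cd_n,L_n]=0$, which guarantees that the factorization $M\cong A_n^{\ba}\otimes\text{Wh}_{\mathbf{a}}(M)$ is compatible with each factor's action. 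Once these compatibilities are recorded, the two natural isomorphisms $\eta$ and $\xi^{-1}$ witness that $\mathcal{F}$ and $\mathcal{G}$ are mutually quasi-inverse, completing the proof that $\mathcal{F}$ is an equivalence.
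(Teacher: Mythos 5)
Your proof is correct, but it is organized around a different standard criterion than the paper's. The paper proves that $\mathcal{F}$ is an equivalence by showing it is essentially surjective and fully faithful: essential surjectivity is exactly Lemma \ref{W-vector}, and full faithfulness is deduced from the isomorphism $\text{Hom}_{\cd_n\otimes U(L_n)}(A_n^{\ba}\otimes V, A_n^{\ba}\otimes W)\cong \text{Hom}_{U(L_n)}(V,W)$, which the paper justifies by invoking Schur's Lemma ($\text{End}_{\cd_n}(A_n^{\ba})=\C$) and otherwise asserts. You instead exhibit $\text{Wh}_{\mathbf{a}}(-)$ as an explicit quasi-inverse, promote the object-level identifications of Lemma \ref{W-vector} and of $\text{Wh}_{\mathbf{a}}(T(A_n^{\ba},V))=V$ to unit and counit natural isomorphisms, and verify functoriality and naturality directly. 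The trade-off is real: your route never needs Schur's Lemma or the Hom-space isomorphism (fullness and faithfulness come for free once the quasi-inverse is in place), but it does require the naturality checks of $\xi$ and $\eta$ and the verification that $\text{Wh}_{\mathbf{a}}$ is a functor, which the paper sidesteps; conversely, the paper's argument is shorter on the page but leaves the Hom-space isomorphism (itself a Schur/density-type argument, parallel to the injectivity step in Lemma \ref{W-vector}) to the reader. Your observation that the $L_n$-action on $\text{Wh}_{\mathbf{a}}(M)$ is the one coming from the factor $1\otimes U(L_n)$ under the isomorphism $\phi$ of Theorem \ref{mainth} --- not from the inclusion $L_n\subset W_n$ --- is exactly the point that makes both proofs work, and making it explicit is a genuine improvement in precision over the paper's wording.
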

\begin{proof}  By Schur's Lemma, $\text{End}_{\cd_n}(A_n^{\ba})=\C$. From Lemma \ref{W-vector}, for any  $M\in \Omega_{\mathbf{a}}^{\widetilde{W}}$,
there is an $L_n$-module $\text{Wh}_{\ba}(M)$ such that
 $ \mathcal{F}(\text{Wh}(M)) \cong M$. From $\text{Hom}_{\cd_n\otimes U(L_n)}(A_n^{\ba}\otimes V, A_n^{\ba}\otimes W)\cong \text{Hom}_{U(L_n)}(V,W)$,
we see that
the homomorphism $$\mathcal{F}_{V,W}: \text{Hom}_{L_{n}}(V,W)\rightarrow
\text{Hom}_{\widetilde{W}_n}(\mathcal{F}(V),\mathcal{F}(W))
$$ is an isomorphism,
 for any modules $V,W\in L_n\text{-mod}$. Therefore $\mathcal{F}$ is an equivalence of the two categories.
\end{proof}

Next we give all simple Whittaker $(A_n,W_n)$-modules using the characterization of finite dimensional simple $L_n$-modules.  Let $\mathfrak{m}$ be the maximal ideal of  $A_n$ generated by
 $t_1,\cdots, t_n$. Then $\mm\Delta_n$ is a Lie subalgebra of $W_n$ which is equal to $L_n$.

The following lemma was given in \cite{XL1}.
 \begin{lemma}\label{xl-lemma}
 \begin{enumerate}[$($a$)$]
\item The linear map $\pi: \mm\Delta_n/\mm^2 \Delta_n\rightarrow \gl_n$ such that
 $$\aligned
t_i\p_j&\mapsto e_{ij},\ \ i,j\in\{1,\cdots,n\},
 \endaligned$$
 is a Lie algebra isomorphism.
\item If $V$ is a finite dimensional simple $L_n$-module, then $ \mm^2 \Delta_n V=0$. So $V $ is a simple $\gl_n$-module
via the isomorphism in (a).
\end{enumerate}
\end{lemma}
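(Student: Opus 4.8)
The plan is to treat the two parts separately, establishing (a) by a direct bracket computation and (b) by exploiting the natural $\Z_{\geq 0}$-grading of $L_n$ together with the simplicity of $V$.

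For part (a), I would first check that $\mm^2\Delta_n$ is a Lie ideal of $\mm\Delta_n=L_n$. For $f\in\mm$ and $g\in\mm^2$ one has $[f\p_i,g\p_j]=f(\p_i g)\p_j-g(\p_j f)\p_i$; since $\p_i g\in\mm$ forces $f(\p_i g)\in\mm^2$ and $g\in\mm^2$ forces $g(\p_j f)\in\mm^2$, the bracket lies in $\mm^2\Delta_n$, so $[\,L_n,\mm^2\Delta_n\,]\subseteq\mm^2\Delta_n$. Because $\mm/\mm^2$ is spanned by the classes of $t_1,\dots,t_n$, the quotient $\mm\Delta_n/\mm^2\Delta_n$ has the $n^2$ classes of $t_i\p_j$ as a basis, so $\pi$ is a linear isomorphism onto $\gl_n$. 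It then remains to match brackets: the structure constants of $W_n$ give $[t_i\p_j,t_k\p_l]=\delta_{jk}t_i\p_l-\delta_{il}t_k\p_j$, whence $\pi([t_i\p_j,t_k\p_l])=\delta_{jk}e_{il}-\delta_{il}e_{kj}=[e_{ij},e_{kl}]$, so $\pi$ is a Lie algebra isomorphism.

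For part (b), I would use the Euler element $E=\sum_{i=1}^n h_i=\sum_{i=1}^n t_i\p_i\in L_n$, which grades $W_n$ via $[E,t^{\bm}\p_i]=(|\bm|-1)t^{\bm}\p_i$. Thus $L_n=\bigoplus_{d\ge 0}(L_n)_d$ with $(L_n)_0$ the span of the $t_i\p_j$ (the copy of $\gl_n$) and $N:=\mm^2\Delta_n=\bigoplus_{d\ge 1}(L_n)_d$ the strictly positive part. The key observation is that $N$ acts locally nilpotently on the finite dimensional module $V$: decomposing $V$ into generalized $E$-eigenspaces $V_{[\lambda]}$, the identity $(E-\lambda-d)(xv)=x(E-\lambda)v$ for $x\in(L_n)_d$ shows that a homogeneous element of degree $d$ sends $V_{[\lambda]}$ into $V_{[\lambda+d]}$. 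Since the set $\Lambda$ of $E$-eigenvalues is finite, any product of more than $\max_{\mu\in\Lambda}\mathrm{Re}\,\mu-\min_{\mu\in\Lambda}\mathrm{Re}\,\mu$ elements of $N$ pushes every eigenvalue out of $\Lambda$ and hence annihilates $V$; that is, $N^{m}V=0$ for $m\gg 0$.

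To finish, let $k\ge 1$ be minimal with $N^{k}V=0$ (such $k$ exists and is at least $1$ because $V\ne 0$). Then $0\ne N^{k-1}V\subseteq V^{N}:=\{v\in V\mid Nv=0\}$, so $V^{N}\ne 0$. Because $N$ is an ideal of $L_n$, for $v\in V^N$, $x\in L_n$ and $y\in N$ we get $y(xv)=x(yv)+[y,x]v=0$, so $V^{N}$ is an $L_n$-submodule; simplicity of $V$ forces $V^{N}=V$, i.e. $\mm^2\Delta_n V=0$. Consequently the $L_n$-action on $V$ factors through $L_n/\mm^2\Delta_n\cong\gl_n$ of part (a), and since the submodules for the two actions coincide, $V$ is a simple $\gl_n$-module. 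The main obstacle is the local nilpotency step: one must argue that the infinite dimensional ideal $N$ acts nilpotently, which is precisely what the grading by $E$ delivers; alternatively one could invoke Engel's theorem for the finite dimensional nilpotent image of $N$ in $\gl(V)$.
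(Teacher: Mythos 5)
Your proof is correct. Note that the paper itself gives no argument for this lemma at all: it simply cites Xue--Lu \cite{XL1}, so there is nothing internal to compare against, and your write-up supplies a self-contained proof where the paper has only a reference. Part (a) is exactly the expected computation: $[\mm\Delta_n,\mm^2\Delta_n]\subseteq\mm^2\Delta_n$ makes the quotient well defined, $\mm\Delta_n/\mm^2\Delta_n\cong\bigoplus_i(\mm/\mm^2)\p_i$ gives the basis $\{t_i\p_j\}$, and the bracket $[t_i\p_j,t_k\p_l]=\delta_{jk}t_i\p_l-\delta_{il}t_k\p_j$ matches $[e_{ij},e_{kl}]$. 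Part (b) is the standard and, in substance, the same mechanism used in \cite{XL1}: the Euler element $E=\sum_i t_i\p_i$ lies in $(L_n)_0$ and grades $L_n$ by $[E,t^{\bm}\p_i]=(|\bm|-1)t^{\bm}\p_i$, homogeneous elements of degree $d$ shift generalized $E$-eigenvalues by $d$ (your identity $(E-\lambda-d)(xv)=x(E-\lambda)v$ iterates correctly), so the positive part $N=\mm^2\Delta_n$ satisfies $N^mV=0$ for $m$ large by finiteness of the spectrum; then $V^N=\{v\mid Nv=0\}$ is nonzero, and since $N$ is an ideal it is an $L_n$-submodule, whence $V^N=V$ by simplicity and the action factors through $L_n/\mm^2\Delta_n\cong\gl_n$ with the same submodule lattice. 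The one point worth flagging is your closing alternative: invoking Engel's theorem requires knowing that every element of the image of $N$ in $\gl(V)$ is a nilpotent operator, and for non-homogeneous elements this again comes from the eigenvalue-shift argument (a sum of operators each raising the finite spectrum by at least $1$ is nilpotent), so the Engel route is not really independent of the grading argument; as a parenthetical remark it is harmless, but the grading proof is the one that carries the weight.
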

By Lemma \ref{xl-lemma} and Theorem \ref{mainth}, for any simple finite dimensional $\gl_n$-module $V$,  we can define an $(A_n,W_n)$-module structure on $T(A_n^{\ba}, V)$ as follows:
$$\aligned t^{\bm}\partial_k(p\otimes v)& =
t^{\bm}\partial_k p\otimes v+\sum_{i=1}^n m_it^{\bm-\be_i} p\otimes e_{ik}v,\\
t^{\bm}(p\otimes v) &=(t^{\bm}p)\otimes v, \ \ \  \bm\in\Z_{\geq 0}^n, k\in \{1,\cdots,n\},\endaligned$$ where $p\in  A_n^{\ba}, v\in V$.

Combining Theorem \ref{cat-equ} with Lemma \ref{xl-lemma}, we obtain the following description of simple Whittaker $(A_n,W_n)$-modules.

\begin{corollary} \label{AW-mod}If $M$ is a simple Whittaker $(A_n,W_n)$-module of type $\phi_{\ba}$ such that $\dim \text{Wh}_{\ba}(M)<\infty$, then $M\cong T(A_n^{\ba}, V)$ for some finite dimensional simple $\gl_n$-module $V$.
\end{corollary}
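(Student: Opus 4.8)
The plan is to recognize $M$ as a simple object of the block $\Omega_{\mathbf{a}}^{\widetilde{W}}$ and then transport simplicity across the categorical equivalence of Theorem \ref{cat-equ}. Since $M$ is a Whittaker $(A_n,W_n)$-module of type $\phi_{\ba}$ with $\dim\text{Wh}_{\ba}(M)<\infty$, it is by definition an object of $\Omega_{\mathbf{a}}^{\widetilde{W}}$, and its simplicity as a Whittaker $(A_n,W_n)$-module is exactly simplicity in this category, because the objects and morphisms of $\Omega_{\mathbf{a}}^{\widetilde{W}}$ are precisely $(A_n,W_n)$-modules (with the stated finiteness condition) together with their module homomorphisms. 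Applying Lemma \ref{W-vector} with $V:=\text{Wh}_{\ba}(M)$, a finite-dimensional $L_n$-module, I immediately obtain an isomorphism $T(A_n^{\ba},V)\cong M$. Thus it only remains to identify $V$ as a finite-dimensional simple $\gl_n$-module.

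The key step is that $V$ is simple as an $L_n$-module. I would argue this directly using the functor $\mathcal{F}\colon V\mapsto T(A_n^{\ba},V)=A_n^{\ba}\otimes V$: if $V'\subseteq V$ were a nonzero proper $L_n$-submodule, then $A_n^{\ba}\otimes V'$ is preserved by the $\cd_n\otimes U(L_n)$-action (the $\cd_n$ factor acts on $A_n^{\ba}$ and the $U(L_n)$ factor on $V'$), hence is an $(A_n,W_n)$-submodule, and since tensoring the nonzero space $A_n^{\ba}$ with a strict nonzero inclusion preserves strictness and nonvanishing, $T(A_n^{\ba},V')$ is a nonzero proper $(A_n,W_n)$-submodule of $T(A_n^{\ba},V)\cong M$, contradicting the simplicity of $M$. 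Equivalently, one can simply note that the equivalence $\mathcal{F}$ preserves and reflects the lattice of subobjects, so $M=\mathcal{F}(V)$ is simple exactly when $V$ is. Either way, $V$ is a finite-dimensional simple $L_n$-module.

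Finally, I invoke Lemma \ref{xl-lemma}(b): a finite-dimensional simple $L_n$-module is annihilated by $\mm^2\Delta_n$, hence factors through $L_n/\mm^2\Delta_n=\mm\Delta_n/\mm^2\Delta_n\cong\gl_n$, and so is a finite-dimensional simple $\gl_n$-module. Combining this identification with the isomorphism $M\cong T(A_n^{\ba},V)$ from the first step yields the claim. I do not expect a genuine obstacle here: all of the substantive work—the categorical equivalence and the finite-dimensionality and simplicity transfer—has already been established in Theorem \ref{cat-equ} and Lemmas \ref{W-vector} and \ref{xl-lemma}, so this corollary is essentially an assembly of those results, the only point requiring a moment's care being the compatibility between simplicity in $\Omega_{\mathbf{a}}^{\widetilde{W}}$ and ordinary $(A_n,W_n)$-module simplicity, which is immediate from the definition of the category.
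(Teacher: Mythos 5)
Your proposal is correct and follows essentially the same route as the paper, which obtains the corollary precisely by combining the equivalence of Theorem \ref{cat-equ} (equivalently, Lemma \ref{W-vector}) with Lemma \ref{xl-lemma}(b); your only addition is to spell out the transfer of simplicity across the equivalence, which the paper leaves implicit.
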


\section{Whittaker $W_n$-modules}

Throughout this section, we always assume that $\ba$ is non-singular, i.e., $\ba\in (\C^*)^n$. In this case,  a module in $\Omega_{\mathbf{a}}^W$ is said to be non-singular. In this section,  we classify simple $W_n$-modules in $\Omega_{\mathbf{a}}^W$.  We also obtain an analogue of Skryabin's equivalence for  the category $\Omega_{\mathbf{a}}^W$.

\subsection{Preliminary lemmas}

The following result gives an interesting property of non-singular Whittaker modules.

\begin{lemma}\label{finite} If $M$ is a  Whittaker $W_n$-module of type $\phi_{\ba}$, then $$M=U(\mh_n)\text{Wh}_{\ba}(M):=\text{Span} \{ h^{\bm} v\mid v\in  \text{Wh}_{\ba}(M), \bm \in \Z_{\geq 0}^n\}.$$ Moreover, if $\{ v_i\mid i\in \Lambda\}$ is a basis of $\text{Wh}_{\ba}(M)$,  then $$\{ h^{\bm}v_i \mid \bm\in \Z_{\geq 0}^n, i\in \Lambda \}$$ is a basis of $M$.
\end{lemma}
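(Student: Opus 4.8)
The plan is to show that $N:=U(\mh_n)\,\text{Wh}_{\ba}(M)=\C[h_1,\dots,h_n]\,\text{Wh}_{\ba}(M)$ equals $M$ and that the multiplication map $U(\mh_n)\otimes_{\C}\text{Wh}_{\ba}(M)\to M$ is bijective; its surjectivity is the first assertion and its injectivity is the ``basis'' assertion. First I would record the commutation relations inside $W_n$. The bracket formula gives $[\p_i,h_j]=\delta_{ij}\p_i$, whence $\p_i h_i^{m}=(h_i+1)^m\p_i$ while $\p_i$ commutes with $h_j$ for $j\neq i$. Writing $D_i:=\p_i-a_i$ and letting $\tau_i$ denote the shift $h_i\mapsto h_i+1$ on $U(\mh_n)$, these relations yield the key identity
\[
D_i\bigl(p(h)v\bigr)=a_i\,(\tau_i-1)(p)(h)\,v,\qquad p\in U(\mh_n),\ v\in\text{Wh}_{\ba}(M),
\]
so that $D_i$ acts on $N$ as the forward--difference operator $a_i(\tau_i-1)$ in the variable $h_i$, which lowers the $h_i$-degree by one. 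In particular $N$ is stable under $\Delta_n$, i.e. it is a $\Delta_n$-submodule of $M$.

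Next I would prove linear independence of $\{h^{\bm}v_i\}$ by a leading--term argument. Given a nontrivial finite relation $\sum_{\bm,i}c_{\bm,i}h^{\bm}v_i=0$, let $d$ be the largest total degree $|\bm|$ occurring with a nonzero coefficient and apply $D^{\bal}:=D_1^{\alpha_1}\cdots D_n^{\alpha_n}$ for a fixed $\bal$ with $|\bal|=d$. Since $(\tau_i-1)^{\alpha_i}$ annihilates $h_i^{m_i}$ when $m_i<\alpha_i$ and sends $h_i^{\alpha_i}$ to the nonzero constant $\alpha_i!$, the element $D^{\bal}(h^{\bm}v_i)$ vanishes unless $\bm\geq\bal$ componentwise; combined with $|\bm|\leq d=|\bal|$ this forces $\bm=\bal$, and then $D^{\bal}(h^{\bal}v_i)=\bigl(\prod_i a_i^{\alpha_i}\alpha_i!\bigr)v_i$ with a nonzero scalar because $\ba$ is non-singular. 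The relation thus collapses to $\bigl(\prod_i a_i^{\alpha_i}\alpha_i!\bigr)\sum_i c_{\bal,i}v_i=0$, and linear independence of the $v_i$ forces $c_{\bal,i}=0$ for every $\bal$ of degree $d$, contradicting the choice of $d$. This establishes freeness, so $N\cong \C[h]\otimes\text{Wh}_{\ba}(M)$ with $D_i$ identified with $a_i(\tau_i-1)$.

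For surjectivity I would argue by contradiction. If $N\neq M$, then $M/N$ is a nonzero Whittaker $\Delta_n$-module on which the commuting operators $D_i$ act locally nilpotently, hence it contains a nonzero common kernel vector; lifting it gives $u\in M\setminus N$ with $D_iu\in N$ for all $i$ and the compatibility $D_iD_ju=D_jD_iu$. It suffices to produce $u'\in N$ with $D_iu'=D_iu$ for all $i$, for then $u-u'\in\bigcap_i\ker D_i=\text{Wh}_{\ba}(M)\subseteq N$ forces $u\in N$, a contradiction. Under the identification above this is exactly the discrete Poincar\'e lemma for the operators $\tau_i-1$, which I would prove by eliminating one variable at a time: because $a_1(\tau_1-1)$ is surjective on $N$ with kernel $\C[h_2,\dots,h_n]\otimes\text{Wh}_{\ba}(M)$, I can subtract an element of $N$ to arrange $D_1u=0$; compatibility then puts each $D_iu$ in that kernel, which is a copy of $N$ for the remaining variables, and I repeat the construction for $h_2,\dots,h_n$.

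The main obstacle is this last step. The forward--difference operators $\tau_i-1$ have nonzero kernels (the polynomials independent of $h_i$), so ``antidifferentiating'' the data $D_iu$ is not automatic and genuinely requires the compatibility conditions, which is why I route it through the variable-by-variable Poincar\'e induction rather than treating the $D_i$ simultaneously. Throughout, non-singularity is essential: it makes the leading scalar $\prod_i a_i^{\alpha_i}\alpha_i!$ nonzero in the independence argument and makes each $D_i=a_i(\tau_i-1)$ surjective on $N$ in the spanning argument.
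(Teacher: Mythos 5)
Your proposal is correct, and its computational core coincides with the paper's: your identity $D_i\bigl(p(h)v\bigr)=a_i(\tau_i-1)(p)(h)\,v$ is exactly what the paper establishes in the form $\p_{\bs}h^{\bm}v=0$ for $\bs>\bm$ and $\p_{\bm}h^{\bm}v=k_{\bm}v$ with $k_{\bm}\neq 0$, where $\p_{\bm}=(\p_1-a_1)^{m_1}\cdots(\p_n-a_n)^{m_n}$, and your leading-term proof of linear independence (apply a top-degree product of the $D_i$, note only the matching monomial survives) is the same mechanism the paper uses, just organized by total degree rather than by the paper's total well-order on $\Z_{\geq 0}^n$. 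The genuine divergence is in the spanning statement. The paper proves $M=U(\mh_n)\text{Wh}_{\ba}(M)$ by a direct induction: each nonzero $w$ has a degree $\bm$ (the maximal index with $\p_{\bm}w\neq 0$), then $\p_{\bm}w$ is a Whittaker vector and $w-\frac{1}{k_{\bm}}h^{\bm}\p_{\bm}w$ has strictly smaller degree, so one inducts down the well-order; no solvability of difference equations ever enters, because $\p_{\bm}$ reads off the top Newton coefficient on the nose and only one term is corrected at a time. You instead pass to $M/N$, extract a common kernel vector of the commuting locally nilpotent operators $D_i$ (valid), lift it, and solve $D_iu'=D_iu$ inside $N$ by a variable-by-variable discrete Poincar\'e lemma. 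This also works: the surjectivity of $\tau_1-1$ on polynomial coefficients, the identification of its kernel with $\C[h_2,\dots,h_n]\otimes\text{Wh}_{\ba}(M)$, and the use of the compatibility conditions are all sound, and you correctly prove freeness first, since your identification of $N$ with $\C[h]\otimes\text{Wh}_{\ba}(M)$ depends on it. The trade-off: the paper's subtraction trick is shorter and does not require knowing $N$ is free before proving spanning, while your route isolates a cleaner structural statement (spanning is the vanishing of the obstruction to antidifferentiation in the difference complex on $N$), which is the kind of argument that transfers to other Whittaker pairs. Both proofs, as you rightly stress, use non-singularity essentially; indeed the lemma fails for singular $\ba$ (e.g.\ $n=1$, $a_1=0$, $M=A_1$, where $U(\mh_1)\ker\p_1=\C$), which is why it sits under the standing assumption $\ba\in(\C^*)^n$ of Section 4 even though the hypothesis is not repeated in the statement.
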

\begin{proof}
For an $\bm\in \Z_{\geq 0}^n$, define
$$ \p_{\bm}=(\p_1-a_1)^{m_1}\cdots(\p_n-a_n)^{m_n}.$$
Note that the elements $\p_{\bm}$ with $ \bm\in\Z_{\geq 0}^n$ form a basis for $U(\Delta_n)$. Consider the total order on $\Z_{\geq 0}^n$ satisfying the condition: $\br<\bm$ if $|\br|<|\bm|$ or
$|\br|=|\bm|$ and
there is an $l\in \{1,\cdots, n\}$ such that $r_i=m_i$ when $1\leq i <l$ and $r_l< m_l$. For each $\bm$, the set $\{\br \in \Z_{\geq 0}^n\mid \br< \bm\}$ is a finite set. So $\Z_{\geq 0}^n$ is isomorphic to $\Z_{\geq 0}$ as an ordered set.
For a nonzero $\bm \in \Z_{\geq 0}^n$, denote by $\bm'$ the predecessor
of $\bm$.

Using the condition that $a_i\neq 0$ for any $i$ and induction on $\bm$, we can show that for any $\bm,\bs\in \Z_{\geq 0}^n$  and nonzero $v\in \text{Wh}_{\ba}(M)$, we have that $\p_{\bs} h^{\bm}v=0$ whenever $\bs>\bm$,
$\p_{\bm} h^{\bm} v= k_{\bm} v$ for some nonzero scalar $k_{\bm}$.

For each $\bm \in \Z_{\geq 0}^n$,  denote by $\mathbb{I}_{\bm}$  the ideal of $U(\Delta_n)$ spanned by  $\p_{\bs}$ with $\bs>\bm$, and $M_{\bm}=\{w\in M\mid \mathbb{I}_{\bm}w=0\}$. Clearly $\text{Wh}_{\ba}(M)=M_{\bo}$.
For any nonzero $w\in M$, by the definition of Whittaker modules, there is an
$\bm \in \Z_{\geq 0}^n$ such that $w\in M_{\bm}\setminus M_{\bm'}$, i.e.,
$\p_{\bm} w\neq 0$ and $\p_{\bs} w=0$ for any $\bs>\bm$. So $\p_{\bm} w\in \text{Wh}_{\ba}(M)$. By the above discussion, $\p_{\bm} h^{\bm} \p_{\bm} w=k_{\bm}\p_{\bm} w  $.
We call $\bm$ the degree of $w$. We use  induction on the degree  $\bm$ of $w$ to show that $w\in U(\mh_n)\text{Wh}_{\ba}(M)$. Let $w'=w-\frac{1}{k_{\bm}}h^{\bm}\p_{\bm}w$. Then
$$\p_{\bm} w'=\p_{\bm} w-\frac{1}{k_{\bm}}\p_{\bm} h^{\bm}\p_{\bm}w=0. $$
This implies that the degree of $w'$ is smaller than $\bm$. By the induction
hypothesis, $w' \in U(\mh_n)\text{Wh}_{\ba}(M)$. Hence $w\in U(\mh_n)\text{Wh}_{\ba}(M)$.

Suppose that $w:=\sum_{\mathbf{r} \leq \mathbf{m}} \sum_{i=1}^k c_{\mathbf{r},i}h^{\mathbf{r}} v_i=0$ in $M$, where $c_{\mathbf{r},i}\in \C$.
From
$\p_{\bm} w=0$, we see that $c_{\bm,i}=0$. Then  by induction on $\bm$, we have that
$c_{\br,i}=0$ for any $\br < \bm$ and $i$. Thus  $\{ h^{\mathbf{m}}v_i\mid \mathbf{m}\in \Z_{\geq 0}^n, i\in \Lambda\}$ is linearly independent. In conclusion, we complete the proof.

\end{proof}

From Lemma \ref{finite},  if $M$ is a  Whittaker $W_n$-module of type $\phi_{\ba}$, then
$M$ is a free $U(\mh_n)$-module and $\text{Wh}_{\ba}(M)$ is a basis of
$M$ as  a free $U(\mh_n)$-module.

\begin{corollary}\label{torsion-1}
Any nontrivial  $W_n$-module $M\in\Omega_{\mathbf{a}}^W$ is a  free $U(\mh_n)$-module of finite rank.
\end{corollary}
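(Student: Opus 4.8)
The goal is to show that any nontrivial module $M\in\Omega_{\mathbf{a}}^W$ is a free $U(\mh_n)$-module of \emph{finite} rank. The plan is to combine the freeness statement already furnished by Lemma \ref{finite} with the finiteness hypothesis built into the definition of $\Omega_{\mathbf{a}}^W$. By Lemma \ref{finite}, every Whittaker $W_n$-module $M$ of type $\phi_{\ba}$ is free as a $U(\mh_n)$-module, with any basis $\{v_i\mid i\in\Lambda\}$ of $\text{Wh}_{\ba}(M)$ serving as a $U(\mh_n)$-module basis of $M$; in particular the rank of $M$ over $U(\mh_n)$ equals $\dim\text{Wh}_{\ba}(M)$.

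The only thing left to observe is that this rank is finite. This is immediate from the very definition of the category $\Omega_{\mathbf{a}}^W$, whose objects are required to satisfy $\dim\text{Wh}_{\ba}(M)<+\infty$. Thus I would write: since $M\in\Omega_{\mathbf{a}}^W$, the Whittaker vector space $\text{Wh}_{\ba}(M)$ is finite dimensional, and by Lemma \ref{finite} a basis of $\text{Wh}_{\ba}(M)$ is a free $U(\mh_n)$-module basis of $M$; hence $M$ is free of rank $\dim\text{Wh}_{\ba}(M)<\infty$ over $U(\mh_n)$.

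In effect there is no genuine obstacle here: the corollary is a direct specialization of Lemma \ref{finite} to the finite-dimensional case, with the finiteness of the rank coming for free from membership in $\Omega_{\mathbf{a}}^W$. The only point that might seem to need a remark is why $\ba$ being non-singular matters, and it enters only through Lemma \ref{finite}, whose proof used $a_i\neq 0$ to guarantee the nonvanishing of the scalars $k_{\bm}$ in the relation $\p_{\bm}h^{\bm}v=k_{\bm}v$; since we operate under the standing assumption $\ba\in(\C^*)^n$ of this section, that hypothesis is in force. The word ``nontrivial'' in the statement plays no essential role for freeness—the zero module is trivially free of rank zero—but is presumably included to exclude the degenerate case and emphasize that a genuine Whittaker module has positive finite rank.
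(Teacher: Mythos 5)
Your proof is correct and is exactly the paper's argument: the paper gives no separate proof of this corollary, treating it as an immediate consequence of Lemma \ref{finite} (freeness of $M$ over $U(\mh_n)$ with a basis of $\text{Wh}_{\ba}(M)$ as a free basis) together with the finiteness condition $\dim\text{Wh}_{\ba}(M)<+\infty$ built into the definition of $\Omega_{\mathbf{a}}^W$. Your remarks on where non-singularity enters (through the scalars $k_{\bm}$ in the proof of Lemma \ref{finite}) are also accurate.
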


\begin{corollary}\label{torsion-2}
The category $\Omega_{\mathbf{a}}^W$ is an abelian category.
\end{corollary}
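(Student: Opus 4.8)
The plan is to realize $\Omega_{\mathbf{a}}^W$ as a full subcategory of the abelian category of all $U(W_n)$-modules and to check that it is closed under the formation of kernels, cokernels, and finite direct sums; this suffices for it to be abelian with the inherited exact structure, the kernels and cokernels being computed in the ambient category. The zero module lies in $\Omega_{\mathbf{a}}^W$, and for a finite direct sum one has $\text{Wh}_{\ba}(M\oplus M')=\text{Wh}_{\ba}(M)\oplus\text{Wh}_{\ba}(M')$ while the local nilpotence of each $\p_i-a_i$ is clearly inherited, so closure under biproducts is immediate.

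Closure under kernels is also routine: the kernel of a morphism $f\colon M\to M'$ is a submodule $K\subseteq M$, on which each $\p_i-a_i$ still acts locally nilpotently, so $K$ is again a Whittaker module of type $\phi_{\ba}$; since $\text{Wh}_{\ba}(K)\subseteq\text{Wh}_{\ba}(M)$, we get $\dim\text{Wh}_{\ba}(K)<\infty$, whence $K\in\Omega_{\mathbf{a}}^W$. The only substantial point is closure under cokernels, that is, showing that a quotient $M/N$ of a module $M\in\Omega_{\mathbf{a}}^W$ again has finite-dimensional Whittaker vector space. First note that $M/N$ is still a Whittaker module of type $\phi_{\ba}$, since local nilpotence of each $\p_i-a_i$ passes to quotients.

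The key is then to invoke Lemma \ref{finite}, which applies to \emph{every} Whittaker module of type $\phi_{\ba}$ (the non-singularity of $\ba$ being in force throughout this section), regardless of the dimension of its Whittaker space. Thus both $M$ and $M/N$ are free modules over the commutative polynomial algebra $U(\mh_n)=\C[h_1,\dots,h_n]$, with their respective Whittaker vector spaces serving as free bases. In particular $\dim\text{Wh}_{\ba}(M)$ and $\dim\text{Wh}_{\ba}(M/N)$ are exactly the $U(\mh_n)$-ranks of $M$ and $M/N$, and by Corollary \ref{torsion-1} the former is finite. It remains only to bound the rank of the quotient: the canonical surjection $M\twoheadrightarrow M/N$ is a surjection of free modules over $U(\mh_n)$, and tensoring with the residue field $U(\mh_n)/I_{\bo}\cong\C$ preserves surjectivity, yielding a surjection of finite-dimensional vector spaces. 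Hence the rank of $M/N$ cannot exceed that of $M$, so $\dim\text{Wh}_{\ba}(M/N)\le\dim\text{Wh}_{\ba}(M)<\infty$ and $M/N\in\Omega_{\mathbf{a}}^W$.

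The only genuine obstacle is this cokernel step, since for an arbitrary quotient there is no a priori reason for the Whittaker space to stay finite-dimensional; what removes it is precisely the structural input of Lemma \ref{finite} (freeness over $U(\mh_n)$ with the Whittaker space as basis) combined with the elementary rank inequality for surjections of free modules over a commutative ring. With closure under kernels, cokernels, and finite direct sums established, $\Omega_{\mathbf{a}}^W$ is abelian.
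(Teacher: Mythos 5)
Your proof is correct and follows essentially the same route as the paper: the paper's own argument likewise reduces the problem to submodules (via $\text{Wh}_{\ba}(N)\subseteq\text{Wh}_{\ba}(M)$) and quotients, handling the latter by applying Lemma \ref{finite} to $M/N$ and comparing $U(\mh_n)$-ranks along the surjection $M\twoheadrightarrow M/N$. Your residue-field justification of the rank inequality and your explicit verification of kernels and biproducts merely fill in details the paper leaves implicit.
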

\begin{proof}Suppose that $M\in \Omega_{\mathbf{a}}^W$ and $N$ is an arbitrary $W_n$-submodule of $M$. Since $\text{Wh}_{\ba}(N)\subset \text{Wh}_{\ba}(M)$, $N\in \Omega_{\mathbf{a}}^W$. By Lemma \ref{finite}, $M/N$ is a free $U(\mh_n)$ module whose rank is not bigger than that of $M$.
 Thus $\dim \text{Wh}_{\ba}(M/N)< \infty$ and hence $M/N\in \Omega_{\mathbf{a}}^W$.
\end{proof}

Next let us recall the $(A_n, W_n)$-cover introduced in \cite{BF1} in a slightly
different form.
Let $\Delta$ be the co-multiplication of $U(\widetilde{W}_n)$, that is
$$\Delta: U(\widetilde{W}_n)\rightarrow U(\widetilde{W}_n)\otimes U(\widetilde{W}_n), X\mapsto X\otimes 1+1\otimes X,\  \ X\in \widetilde{W}_n.$$
Let $\pi: U(\widetilde{W}_n)\rightarrow U(W_n)$ be the homomorphism such that
$\pi |_{W_n}=\text{id}_{W_n}, \pi |_{A_n}=0$. Denote the composition $$(\text{id}_{\widetilde{W}_n}\otimes \pi)\cdot\Delta: U(\widetilde{W}_n)\rightarrow U(\widetilde{W}_n)\otimes U(W_n)$$ by $\Delta'$. Then for any $(A_n,W_n)$-module $P$ and $W_n$-module $M$, $P\otimes M$ becomes an $(A_n,W_n)$-module under the pulling back of $\Delta'$. In particular,
$W_n\otimes M$ can be defined to be an $(A_n,W_n)$-module by
$$t^{\br}\cdot (t^{\bm}\p_i\otimes w)=t^{\bm+\br}\p_i\otimes w,\ \ w\in M, \br,\bm\in \Z_+^n, i\in\{1,\cdots,n\}.$$

Define a linear map $\theta: W_n\otimes M\rightarrow M$ such that
$$t^{\bm}\p_i\otimes w\mapsto t^{\bm}\p_i w, \ w\in M,\bm\in \Z_+^n, i\in\{1,\cdots,n\}.$$
One can check directly that $\theta$ is a $W_n$-module homomorphism.
Define
$$K(M)=\{ v\in \ker\theta \mid A_n\cdot v\subset \ker \theta\}$$
which is an $(A_n,W_n)$-submodule of $W_n\otimes M$.
Let $\widehat{M}=(W_n\otimes M)/K(M)$ which is called the $(A_n, W_n)$-cover of
$M$. It is natural that $\theta$ induces a $W_n$-module homomorphism
$\hat{\theta}: \widehat{M}\rightarrow M$. For any $X\in W_n$, $v\in M$,
denote the image of $X\otimes v$ in $\widehat{M}$ by $X\boxtimes v$.

\subsection{Simple Whittaker modules}

For any $m\in \Z_{\geq 0}$, $j\in \{1,\dots,n\}$ and $\bal,\bbe\in \Z^n_{\geq 0}$, in $U(W_n)$, we denote the following operator:
$$\omega_{\bal,\bbe}^{m,j,l,p}:=\sum_{i=0}^m (-1)^i \binom{m}{i}t^{\bal+(m-i)\be_j}\p_l\cdot t^{\bbe+i\be_j}\p_p. $$

The following lemma was given by Xue and Lu, see \cite{XL1}.
\begin{lemma}\label{zero}If $M$ is a uniformly bounded weight module over $W_n$, then
there is an $m\in \N$ such that  $\omega_{\bal,\bbe}^{m,j,l,p}M=0$ for all
$j,l,p\in \{1,\dots,n\}$ and $\bal,\bbe\in \Z^n_{\geq 0}$.
\end{lemma}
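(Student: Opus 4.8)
The plan is to fix a weight vector $v\in M_\mu$ and read $\omega_{\bal,\bbe}^{m,j,l,p}v$ as an $m$-th finite difference of a sequence of vectors lying in a single weight space of bounded dimension. Write $x_s=t^{\bal+s\be_j}\p_l$ and $y_k=t^{\bbe+k\be_j}\p_p$, so that $\omega_{\bal,\bbe}^{m,j,l,p}=\sum_{i=0}^m(-1)^i\binom{m}{i}x_{m-i}y_i$. Since $x_s$ has $\mh_n$-weight $\bal+s\be_j-\be_l$ and $y_k$ has weight $\bbe+k\be_j-\be_p$, every vector $c_i:=x_{m-i}y_iv$ lies in the same weight space $M_{\mu+\gamma}$ with $\gamma=\bal+\bbe+m\be_j-\be_l-\be_p$, whose dimension is at most the uniform bound $N$ on the weight multiplicities of $M$. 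Thus $\omega_{\bal,\bbe}^{m,j,l,p}v=\pm(\Delta^m c)(0)$ is the $m$-th forward difference of the finite sequence $(c_i)_{0\le i\le m}$, and it vanishes as soon as $i\mapsto x_{m-i}y_iv$ is the restriction of a polynomial of degree $<m$. So it suffices to produce an integer $d=d(N,n)$, independent of $\bal$, $\bbe$ and of the total degree $r$, such that for every $r$ the $\text{Hom}(M_\mu,M_{\mu+\gamma})$-valued family $i\mapsto x_{r-i}y_i$ is polynomial in $i$ of degree at most $d$; then any $m>d$ works simultaneously for all $\bal,\bbe$ and for the finitely many triples $(j,l,p)$.

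First I would dispose of the non-commutative part. Using the bracket of $W_n$ one computes $[x_s,y_k]=(\beta_l+k\delta_{jl})\,t^{\bal+\bbe+(s+k)\be_j-\be_l}\p_p-(\alpha_p+s\delta_{jp})\,t^{\bal+\bbe+(s+k)\be_j-\be_p}\p_l$, which along a fixed total degree $s+k=r$ is a degree-$\le 1$ polynomial in $(s,k)$ with values in two fixed root vectors. Hence, modulo a family of finite-difference degree $\le 1$, one may reorder the two factors freely, and the essential $i$-dependence is controlled by the copy of the Witt algebra $W^{(j)}=\span\{t_j^{r+1}\p_j\mid r\ge -1\}\cong W_1$ acting in the distinguished direction $j$: indeed $\ad(t^{2\be_j}\p_j)$ raises the power of $t_j$ in $x_s$ and $y_k$ by one, so that, up to the scalars $\alpha_j+s$, $\beta_j+k$ and lower-order corrections, the whole family is generated from $x_0,y_0$ by this $W_1$-action. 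In this way the problem localizes to a one-variable, Virasoro-type statement: on a uniformly bounded module the matrix coefficients of products of the shifted currents are polynomial in the shift of degree bounded only in terms of $N$.

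To establish this bounded-degree polynomiality I would pass to generating functions. For $v\in M_\mu$ and a functional $\xi$ on the relevant target weight spaces, consider the correlation function $\Phi(z,w)=\sum_{s,k\ge 0}\xi(x_sy_kv)\,z^sw^k$. The commutation relation above shows that the singular part of $\Phi$ is concentrated on the diagonal $z=w$, so that $\Phi$ is the expansion of a rational function whose only pole lies along $z=w$, and the alternating binomial sum defining $\omega_{\bal,\bbe}^{m,j,l,p}$ is exactly the extraction of a coefficient that vanishes once $m$ exceeds the order of this pole. The hard part, and the step I expect to be the main obstacle, is to bound the order of this diagonal pole by a constant depending only on $N$: finite-dimensionality of each weight space by itself only yields a linear recurrence among the $x_{r-i}y_i$ with a priori arbitrary characteristic roots, whereas we need the recurrence to be \emph{unipotent}, i.e. of pure finite-difference $(\sigma-1)^d$ type, equivalently that no pole occurs off the diagonal. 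Forcing this requires combining the precise commutation relations with the multiplicity bound through a Vandermonde/interpolation estimate; this is the combinatorial heart carried out in \cite{XL1}, and alternatively it can be deduced from Mathieu's structure theory of uniformly bounded $W_1$-modules applied in the direction $j$. Once the uniform degree bound $d$ is in hand, taking $m=d+1$ — the same $m$ for every $\bal,\bbe,j,l,p$ — yields $\omega_{\bal,\bbe}^{m,j,l,p}M=0$.
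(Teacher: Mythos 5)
Your proposal is not a complete proof: the step you yourself flag as ``the main obstacle'' --- producing a bound $d=d(N,n)$, independent of $\bal,\bbe$ and of the total degree $r$, on the degree of polynomiality of $i\mapsto x_{r-i}y_i$ (equivalently, on the order of the diagonal pole of $\Phi$, equivalently the unipotency of the linear recurrence) --- \emph{is} the entire content of the lemma. Everything you actually establish is correct but routine: the weight bookkeeping showing all $c_i$ lie in a single weight space of dimension at most $N$, the rewriting of $\omega_{\bal,\bbe}^{m,j,l,p}v$ as an $m$-th finite difference, and the observation that the commutator $[x_s,y_k]$ contributes only an affine family in $i$ along $s+k=r$, hence dies under $\Delta^m$ for $m\geq 2$. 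As you correctly diagnose, bounded multiplicity by itself only yields a linear recurrence with a priori arbitrary characteristic roots, and nothing in your sketch excludes the non-unipotent part; you delegate exactly this point to \cite{XL1}. Since that point is the lemma itself rather than a smaller technical sub-claim, the argument as written is circular. Note also that the paper itself offers no proof of this statement: it is quoted from Xue--Lu \cite{XL1} in one line. So a clean citation of the full statement is legitimate and is what the paper does; your hybrid --- a partial derivation whose core is outsourced to the same reference --- is neither a self-contained proof nor an accurate account of what is being borrowed.

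Two secondary problems. First, the assertion that $\Phi(z,w)$ is the expansion of a rational function whose only pole lies on $z=w$ is made before anything justifies even rationality; it is exactly equivalent to the unipotency you later concede is unproven, so it cannot be used as an intermediate step. Second, the proposed fallback via Mathieu's theory of uniformly bounded $W_1$-modules ``applied in the direction $j$'' does not work as stated: restricting $M$ to $\C[t_j]\p_j\cong W_1$ destroys uniform boundedness, because an $h_j$-weight space of $M$ is the direct sum of the $\mh_n$-weight spaces $M_\lambda$ over all $\lambda$ with fixed $j$-th coordinate, and this sum is typically infinite dimensional; moreover $M$ need not have finite length or be simple over this subalgebra, which Mathieu's classification would require.
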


Using Lemma \ref{zero}, we show that for any simple  module $M$ in
$\Omega_{\mathbf{a}}^W$,  $\text{Wh}_{\ba}(\widehat{M})$ is finite dimensional.
\begin{lemma}\label{fin}Suppose that $M$ is a simple module in $\Omega_{\mathbf{a}}^W$.
Then $\text{Wh}_{\ba}(\widehat{M})$ is finite dimensional.
\end{lemma}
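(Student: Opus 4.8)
The plan is to reduce the statement to a genuine identity on $M$ — namely that the degree-two operators $\omega_{\bal,\bbe}^{m,j,l,p}$ of Lemma \ref{zero} annihilate $M$ for a single uniform $m$ — and then to convert that identity into relations inside the cover $\widehat M$ which force its Whittaker space to be finite dimensional.

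\textbf{Step 1 (from uniform boundedness to $\omega M=0$).} By Corollary \ref{torsion-1}, $M$ is a free $U(\mh_n)$-module of finite rank $k$, so each $M^{\br}=M/I_{\br}M$ has dimension $k$ and $\mathfrak{W}(M)=\bigoplus_{\br\in\Z^n}M^{\br}$ is a uniformly bounded weight module. Lemma \ref{zero} then supplies an $m\in\N$ with $\omega_{\bal,\bbe}^{m,j,l,p}\mathfrak{W}(M)=0$ for all indices. The key observation is that each monomial field $t^{\bn}\p_s$ acts on $\mathfrak{W}(M)$ by the genuine action followed by the weight shift $\br\mapsto\br+\bn-\be_s$; since every summand of $\omega_{\bal,\bbe}^{m,j,l,p}$ carries the same total weight $\bw=\bal+\bbe+m\be_j-\be_l-\be_p$, the weighting action of $\omega$ on $v+I_{\br}M$ is exactly $\omega v+I_{\br+\bw}M$. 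Hence $\omega v\in I_{\br+\bw}M$ for every $\br\in\Z^n$, i.e. $\omega v\in\bigcap_{\bs\in\Z^n}I_{\bs}M$. Because $M$ is free over $U(\mh_n)\cong\C[h_1,\dots,h_n]$ and $\Z^n$ is Zariski dense in $\C^n$, this intersection is $0$, so $\omega_{\bal,\bbe}^{m,j,l,p}M=0$ for all indices.

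\textbf{Step 2 (relations in $\widehat M$ and a degree bound).} For any $w\in M$ I would feed the vanishing of $\omega$ into the cover. The element
\[
u=\sum_{i=0}^m(-1)^i\binom{m}{i}\,t^{\bal+(m-i)\be_j}\p_l\otimes\bigl(t^{\bbe+i\be_j}\p_p w\bigr)
\]
satisfies $\theta(u)=\omega_{\bal,\bbe}^{m,j,l,p}w=0$, and $\theta(t^{\br}u)=\omega_{\bal+\br,\bbe}^{m,j,l,p}w=0$ for all $\br$, so $u\in K(M)$ and $u=0$ in $\widehat M$:
\[
\sum_{i=0}^m(-1)^i\binom{m}{i}\,t^{\bal+(m-i)\be_j}\p_l\boxtimes\bigl(t^{\bbe+i\be_j}\p_p w\bigr)=0.
\]
Solving for the $i=0$ term and using $W_nM=M$ (simplicity of $M$), I would rewrite any generator $t^{\bn}\p_l\boxtimes w$ with $n_j\ge m$ as a combination of generators of strictly smaller $W_n$-degree. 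Iterating over $j$ shows $\widehat M=\sum_{\bn:\,n_j<m\ \forall j,\ l}t^{\bn}\p_l\boxtimes M$; in particular the increasing filtration $F_d$ of $\widehat M$ by $W_n$-tensor-degree is $\Delta_n$-stable and stabilizes at $F_{n(m-1)}=\widehat M$.

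\textbf{Step 3 (finiteness of the Whittaker space).} On the associated graded $F_d/F_{d-1}$ the operator $\p_i$ acts, modulo lower filtration, as $\p_i$ on the inner factor, so $F_d/F_{d-1}$ is, as a $\Delta_n$-module, a quotient of finitely many copies of $M$. Passing to leading terms embeds the associated graded of $\text{Wh}_{\ba}(\widehat M)$ into $\bigoplus_d\text{Wh}_{\ba}(F_d/F_{d-1})$, and left exactness of $\text{Wh}_{\ba}(-)=\mathrm{Hom}_{U(\Delta_n)}(\C_{\ba},-)$ along the finite filtration yields $\dim\text{Wh}_{\ba}(\widehat M)\le\sum_d\dim\text{Wh}_{\ba}(F_d/F_{d-1})$.

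I expect \textbf{Step 3 to be the main obstacle}: each graded piece is only a \emph{quotient} of copies of $M$, and the Whittaker functor, being merely left exact, does not a priori bound the Whittaker vectors of a quotient by those of $M$. To close this gap I would instead argue through the Weyl algebra: via the isomorphism $\phi$ of Theorem \ref{mainth}, $\widehat M$ is a $\cd_n$-module on which each $\p_i-a_i$ is locally nilpotent, the degree bound of Step 2 makes it finitely generated over $\cd_n$, and hence of finite length in $\mathcal{V}_{\ba}^D$; Lemma \ref{semi-simple} then forces $\widehat M\cong(A_n^{\ba})^{\oplus N}$ as a $\cd_n$-module, so that $\text{Wh}_{\ba}(\widehat M)$ is $N$-dimensional. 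Verifying the finite generation over $\cd_n$ — equivalently, that the inner $M$-factor is reached from finitely many vectors under the combined multiplication and $\Delta_n$-actions — is the delicate point, and is precisely where the degree bound of Step 2 is used.
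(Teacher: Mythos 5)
Your Steps 1 and 2 are correct and coincide with the paper's own argument: freeness of $M$ over $U(\mh_n)$ (Corollary \ref{torsion-1}) makes $\mathfrak{W}(M)$ uniformly bounded, Lemma \ref{zero} together with $\bigcap_{\bal\in\Z^n}I_{\bal}M=0$ gives $\omega_{\bal,\bbe}^{m,j,l,p}M=0$, and pushing these identities into $K(M)$ yields exactly the paper's reduction $\widehat{M}=\sum_{l}\sum_{|\bal|\le mn}t^{\bal}\p_l\boxtimes M$ (using $M=W_nM$). The genuine gap is Step 3, and neither of your two proposed ways to finish is carried out. The associated-graded bound fails for the reason you yourself give: $\text{Wh}_{\ba}$ is only left exact, and a $\Delta_n$-module quotient of finitely many copies of $M$ can have arbitrarily large Whittaker space. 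The Weyl-algebra fallback is worse than ``delicate''; it is circular. For \emph{any} $\cd_n$-module $N$ on which every $\p_i-a_i$ acts locally nilpotently, the natural map $A_n^{\ba}\otimes\text{Wh}_{\ba}(N)\to N$ is an isomorphism (injectivity by the density argument in Lemma \ref{W-vector}; surjectivity by induction on the nilpotency degree, integrating one variable at a time), so $N$ is finitely generated over $\cd_n$ if and only if $\dim\text{Wh}_{\ba}(N)<\infty$. Thus ``the degree bound makes $\widehat{M}$ finitely generated over $\cd_n$'' is not a checkable intermediate step: it \emph{is} the lemma, restated. Moreover the naive attempt fails: the $\cd_n$-submodule generated by the elements $t^{\bal}\p_l\boxtimes v$ with $v\in\text{Wh}_{\ba}(M)$, $|\bal|\le mn$, is contained in $\text{span}\{t^{\bm}\p_l\boxtimes v \mid \bm\in\Z_{\geq 0}^n,\, l,\, v\in\text{Wh}_{\ba}(M)\}$, because both $t^{\br}\cdot$ and $\p_k$ preserve this span (indeed $\p_k(t^{\bm}\p_l\boxtimes v)=m_kt^{\bm-\be_k}\p_l\boxtimes v+a_kt^{\bm}\p_l\boxtimes v$), and nothing in your argument shows this span equals $\widehat{M}$ (the inner factors $h^{\bm}v$ are never reached).

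What actually closes the proof in the paper is finite generation over $U(\mh_n)$, not over $\cd_n$ and not a filtration bound. The key point is that the adjoint action of $\mh_n$ preserves the outer monomial degree: $[h_k,t^{\bal}\p_l]=\alpha_kt^{\bal}\p_l-\delta_{kl}t^{\bal}\p_k$. Combining this with $M=U(\mh_n)\text{Wh}_{\ba}(M)$ (Lemma \ref{finite}) and inducting on the $h$-degree of the inner factor, your Step 2 bound gives $\widehat{M}=U(\mh_n)\cdot\{t^{\bal}\p_l\boxtimes v_j \mid |\bal|\le mn,\ l,\ j\}$, a finitely generated $U(\mh_n)$-module. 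On the other hand, $\widehat{M}$ is itself a Whittaker $W_n$-module of non-singular type $\phi_{\ba}$, so Lemma \ref{finite} makes it a free $U(\mh_n)$-module with basis $\text{Wh}_{\ba}(\widehat{M})$; a finitely generated free module has a finite basis, whence $\dim\text{Wh}_{\ba}(\widehat{M})<\infty$. This use of the Cartan subalgebra, for which your Steps 1 and 2 have done all the preparation, is precisely the ingredient your proposal is missing.
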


\begin{proof} By Lemma \ref{finite} and Corollary \ref{torsion-1},   $M$  is a   free  $U(\mh_n)$-module of finite rank.  Thus  $\mathfrak{W}(M)$ is a uniformly bounded weight $W_n$-module. In fact each weight space of
$\mathfrak{W}(M)$  has dimension no bigger than the rank of $M$
as a free $U(\mh_n)$-module.  From Lemma \ref{zero}, there exists $m\in \N$ such that
$$\omega_{\br,\bs}^{m,j,l,p}\mathfrak{W}(M)=0,$$
for all  all
$j,l,p\in \{1,\dots,n\}$ and $\br,\bs\in \Z^n_{\geq 0}$.

By the definition of $\mathfrak{W}(M)$, we deduce that
$$\omega_{\br,\bs}^{m,j,l,p}M\subseteq \bigcap_{\bal\in \Z^n} I_{\bal}M=0,\ \forall\  j,l,p\in \{1,\dots,n\}, \br,\bs\in \Z^n_{\geq 0}, $$ where $I_\bal$ is the maximal ideal of  $U(\mh_n)$ generated by $h_1-\alpha_1,\dots, h_n-\alpha_n$. The result $\bigcap_{\bal\in \Z^n} I_{\bal}M=0$ follows from that $M$  is a  finitely generated free  $U(\mh_n)$-module.

The simplicity of $M$ implies that
 $M=W_n M$. Next  we prove by induction on $|\br|$ that
$$t^{\br}\p_l\boxtimes t^{\bs}\p_p v\in \sum_{i=1}^n\sum_{|\bal|\le mn}  t^{\bal} \p_i \boxtimes M,$$
for all $v\in M, \br,\bs\in\Z^n, p,l\in\{1,\cdots,n\}.$
This is clear  for $\br\in\Z^n_{\geq 0}$ with $|\br|\le mn$. Now we assume that $|\br|>mn$. Without lose of generality, we may assume that $r_1>m$.

For any $\br\in\Z^n_{\geq 0}$, we have
$$\aligned
 &\theta(\sum_{i=0}^m (-1)^i {m\choose i} t^{\br-i\be_1}\p_l\otimes ( t^{\bs+i\be_1}\p_pv ))\\
=&\sum_{i=0}^m (-1)^i {m\choose i}t^{\br-i\be_1}\p_l  t^{\bs+i\be_1}\p_pv\\
=&\omega_{\br,\bs}^{m,1,l,p}(v)=0,
\endaligned$$
which implies
$$\sum_{i=0}^m (-1)^i {m\choose i} t^{\br-i\be_1}\p_l\otimes t^{\bs+i\be_1}\p_pv \in K(M).$$
This means that
$$ \aligned t^{\br}\p_l\boxtimes t^{\bs}\p_pv =\sum_{i=1}^m (-1)^i {m\choose i} t^{\br-i\be_1}\p_l\boxtimes t^{\bs+i\be_1}\p_pv ,
\endaligned$$ in
$\widehat{M}$
which belongs to $ \sum_{i=1}^n\sum_{|\br|\le mn}  t^{\br} \p_i \boxtimes M$
by the induction hypothesis.

Therefore $\widehat{M}$ is a finitely generated $U(\mh_n)$-module. By Lemma \ref{finite},  $\text{Wh}_{\ba}(\widehat{M})$ is finite dimensional. Otherwise, $\widehat{M}$ is a free $U(\mh_n)$-module of infinite rank.
\end{proof}

\begin{theorem}\label{c-t} If $M$ is a simple module in $\Omega_{\mathbf{a}}^W$, then $M$ is isomorphic to some simple sub-quotient of $T(A_n^{\ba}, V)$ for some finite dimensional simple $\gl_n$-module $V$.
\end{theorem}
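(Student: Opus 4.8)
The plan is to compare the simple module $M$ with its $(A_n,W_n)$-cover $\widehat M$, and then to transport the classification of Whittaker $(A_n,W_n)$-modules from Theorem \ref{cat-equ} back to the category $\Omega_{\mathbf a}^W$. The first and decisive point is to show that $\widehat M$ actually lies in $\Omega_{\mathbf a}^{\widetilde W}$. Since $\widehat M$ is a quotient of the $(A_n,W_n)$-module $W_n\otimes M$, it is automatically an $(A_n,W_n)$-module, so only the two defining conditions of $\Omega_{\mathbf a}^{\widetilde W}$ remain. For the type condition I would observe that on $W_n\otimes M$ the operator $\p_i-a_i$ decomposes as $\ad(\p_i)\otimes 1+1\otimes(\p_i-a_i)$, a sum of two commuting operators; the first is locally nilpotent because $\ad(\p_i)$ strictly lowers the total degree of $t^{\bm}\p_j$ and hence acts locally nilpotently on the adjoint module $W_n$, and the second is locally nilpotent because $M$ is of type $\phi_{\ba}$. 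Thus $\p_i-a_i$ is locally nilpotent on $W_n\otimes M$, hence on its quotient $\widehat M$, so $\widehat M$ is of type $\phi_{\ba}$. The finiteness $\dim\mathrm{Wh}_{\ba}(\widehat M)<\infty$ is precisely Lemma \ref{fin}. Together these give $\widehat M\in\Omega_{\mathbf a}^{\widetilde W}$.

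Next I would feed $\widehat M$ into the equivalence. By Lemma \ref{W-vector} there is an isomorphism $\widehat M\cong T(A_n^{\ba},V)$, where $V=\mathrm{Wh}_{\ba}(\widehat M)$ is a \emph{finite-dimensional} $L_n$-module. Being finite-dimensional, $V$ admits a finite composition series $0=V_0\subset V_1\subset\cdots\subset V_s=V$ of $L_n$-modules whose successive quotients $\bar V_k=V_k/V_{k-1}$ are simple; by Lemma \ref{xl-lemma} each $\bar V_k$ is in fact a simple finite-dimensional $\gl_n$-module (as $\mm^2\Delta_n$ annihilates it). Applying the functor $\mathcal F=T(A_n^{\ba},-)$, which is exact since it amounts to tensoring the fixed module $A_n^{\ba}$ over $\C$, I obtain a finite filtration $0=\widehat M_0\subset\cdots\subset\widehat M_s=\widehat M$ by $(A_n,W_n)$-submodules $\widehat M_k=\mathcal F(V_k)$ with successive quotients $\widehat M_k/\widehat M_{k-1}\cong T(A_n^{\ba},\bar V_k)$.

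Finally I would exploit the cover map $\hat\theta\colon\widehat M\to M$. This is a $W_n$-module homomorphism, and it is surjective: non-singularity of $\ba$ forces $W_nM\neq 0$, whence $W_nM=M$ by simplicity, and $W_nM$ is exactly the image of $\theta$. Each $\hat\theta(\widehat M_k)$ is a $W_n$-submodule of the simple module $M$, so equals $0$ or $M$; these images increase from $0$ to $M$ along the filtration, so there is a first index $k_0$ with $\hat\theta(\widehat M_{k_0})=M$, and then $\widehat M_{k_0-1}\subseteq\ker\hat\theta$. Consequently $\hat\theta$ descends to a surjection of $W_n$-modules $T(A_n^{\ba},\bar V_{k_0})\cong\widehat M_{k_0}/\widehat M_{k_0-1}\twoheadrightarrow M$. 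Setting $V=\bar V_{k_0}$, a simple finite-dimensional $\gl_n$-module, exhibits $M$ as a simple $W_n$-quotient, and in particular a simple sub-quotient, of $T(A_n^{\ba},V)$, which is the assertion.

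The main obstacle is entirely concentrated in the first paragraph, namely the claim that the cover $\widehat M$ remains an object of the finite Whittaker category $\Omega_{\mathbf a}^{\widetilde W}$. This hinges on Lemma \ref{fin}, whose proof uses the weighting functor to realize $\mathfrak W(M)$ as a uniformly bounded weight module and then invokes the vanishing of the operators $\omega_{\bal,\bbe}^{m,j,l,p}$ from Lemma \ref{zero} to bound the number of $U(\mh_n)$-generators of $\widehat M$, forcing $\mathrm{Wh}_{\ba}(\widehat M)$ to be finite-dimensional. Once $\widehat M$ is known to lie in $\Omega_{\mathbf a}^{\widetilde W}$, every remaining step is a formal consequence of the equivalence of Theorem \ref{cat-equ}, of the exactness of $T(A_n^{\ba},-)$, and of the simplicity of $M$.
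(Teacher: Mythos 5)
Your proof is correct, and it shares the paper's skeleton---the $(A_n,W_n)$-cover $\widehat{M}$, the finiteness Lemma \ref{fin}, and the structure theory of $\Omega_{\mathbf{a}}^{\widetilde{W}}$---but the concluding reduction is genuinely different. The paper never filters the cover: it considers all modules $M_1\in\Omega_{\mathbf{a}}^{\widetilde{W}}$ admitting a $W_n$-epimorphism onto $M$, chooses one with $\dim \text{Wh}_{\ba}(M_1)$ minimal, and argues by contradiction (using rank additivity of free $U(\mh_n)$-modules from Corollary \ref{torsion-1}: a maximal submodule $M_2$ and the quotient $M_1/M_2$ both have strictly smaller Whittaker dimension, and one of them still surjects onto $M$) that the minimal $M_1$ must be a simple $(A_n,W_n)$-module, so that Corollary \ref{AW-mod} applies to it directly. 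You instead stay with $\widehat{M}$ itself: identify it as $T(A_n^{\ba},V)$ via Lemma \ref{W-vector}, push a composition series of the finite-dimensional $L_n$-module $V$ through the exact functor $T(A_n^{\ba},-)$, and locate the first composition factor whose image under $\hat{\theta}$ is all of $M$. Your route is canonical---no minimal choice, no contradiction---at the modest cost of checking that $T(A_n^{\ba},-)$ is exact and carries $L_n$-submodules to $(A_n,W_n)$-submodules; the paper's route buys the slightly sharper intermediate statement that a minimal cover is itself simple. Two details you supply are in fact left implicit in the paper: the verification that $\widehat{M}$ is of type $\phi_{\ba}$ (your decomposition of $\p_i-a_i$ on $W_n\otimes M$ as the commuting sum $\ad(\p_i)\otimes 1+1\otimes(\p_i-a_i)$ of locally nilpotent operators), which is needed before $\widehat{M}$ can be placed in $\Omega_{\mathbf{a}}^{\widetilde{W}}$ at all, and the role of non-singularity of $\ba$ in excluding $W_nM=0$, which is what makes $\hat{\theta}$ surjective rather than a bare appeal to simplicity.
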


\begin{proof} From Lemma \ref{fin},  there is an $(A_n,W_n)$-module  $M_1$  such that  $\text{Wh}_{\ba}(M_1)$ is finite dimensional and
a $W_n$-module epimorphism $\psi: M_1\rightarrow M$. We may choose $M_1$ such that $\dim \text{Wh}_{\ba}(M_1)$  is minimal.
The minimality of $M_1$ forces that $M_1$ is a simple  $(A_n,W_n)$-module. Otherwise, $M_1$ has a nonzero maximal $(A_n,W_n)$-submodule $M_2$.
Then from corollary \ref{torsion-1}, $M_2$ and $M_1/M_2$ are free $U(\mh_n)$  module of rank both less than $\dim \text{Wh}_{\ba}(M_1)$. Because $M$ is a simple $W_n$-module, we must have either $\psi(M_2)=M$ or $\psi(M_2)=0$. Then either $M_2$ or $M_1/M_2$ has a simple $W_n$-quotient module isomorphic to $M$,  contradicting  the minimality of $M_1$. From   Corollary \ref{AW-mod}, we know that $M_1\cong T(A_n^{\ba}, V)$ for a finite dimensional simple $\gl_n$-module $V$. Hence $M$ is a simple $W_n$-quotient of  $T(A_n^{\ba}, V)$.\end{proof}

The structure of $T(A_n^{\ba}, V)$ was determined in \cite{LLZ}. Let $\C^n$ be the natural $n$-dimensional representation of $\gl_n$ and let $V(\delta_k,k)$ be its
$k$-th exterior power, $k = 1,\cdots,n$. We use
 $V(\delta_0,
0)$ to denote  the $1$-dimensional trivial $\gl_n$-module. A module $V$ over a Lie algebra $\mg$ is called a trivial module if $X v=0$ for any $X\in \mg, v\in V$.
 For a finite dimensional simple $\gl_n$-module $V$, $T(A_n^{\ba}, V)$ is a simple module over
$W_n$ if  $V \not\cong V(\delta_k, k)$ for any $k\in \{1,\cdots, n\}$. Moreover
we have the following complex of $W_n$-modules:
\begin{equation}\label{complex}\xymatrix
{0\ar[r] &T(A_n^{\ba},V(\delta_0,0))\ar[r]^{\pi_0} &T(A_n^{\ba},V(\delta_1,1))\ar[r]^{\hskip 1cm\pi_1} &\cdots \\
\cdots\ar[r]^{\pi_{n-2}\hskip 2cm}&{T(A_n^{\ba},V(\delta_{n-1},n-1))}\ar[r]^{\hskip .8cm\pi_{n-1}} &T(A_n^{\ba},V(\delta_n,n))\ar[r]& 0,}
\end{equation}
where
$\pi_{k-1}$ is the $W_n$-module homomorphism defined by
\begin{equation*}\begin{array}{lrcl}
\pi_{k-1}:& T(A_n^{\ba},V(\delta_{k-1},k-1)) & \rightarrow & T(A_n^{\ba}, V(\delta_{k},k)),\\
      & p\otimes v & \mapsto & \sum_{j=1}^{n} \p_j\cdot p\otimes (\be_j\wedge v),
\end{array}\end{equation*}
for all $p\in A_n^{\ba}$ and $v\in V(\delta_{k-1}, k-1)$, where $T(A_n^{\ba}, V(\delta_{-1},-1)) =0$, $k\in \{0, 1,\cdots, n\}$.

\begin{lemma}\label{exact}\begin{enumerate}[$($a$)$]
\item  The complex (\ref{complex}) is exact.
\item  The $W_n$-modules $\text{Im}\pi_0, \cdots, \text{Im}\pi_{n-1}$ are simple.
\end{enumerate}
\end{lemma}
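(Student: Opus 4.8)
The plan is to recognize the complex (\ref{complex}) as a twisted de Rham (Koszul) complex of commuting operators, prove part (a) by a K\"unneth argument, and then derive part (b) from the resulting short exact sequences together with the simplicity criterion for $T(A_n^{\ba},V)$ recalled above. First I would pass everything through the twist $\sigma_{\ba}$: on $A_n^{\ba}$ the operator $\p_j$ acts as $D_j:=\frac{\p}{\p t_j}+a_j$, and the $D_j$ commute. Under the identification $\be_{i_1}\wedge\cdots\wedge\be_{i_k}$ with a basis $k$-form, the map $\pi_{k-1}$ sends $p\otimes\omega\mapsto\sum_{j} D_j p\otimes(\be_j\wedge\omega)$, so (\ref{complex}) is precisely the Koszul cochain complex $A_n\otimes\Lambda^{\bullet}\C^n$ of the commuting family $D_1,\dots,D_n$ (equivalently, the genuine de Rham complex of the polynomial forms $e^{\ba\cdot t}\C[t]$). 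Since $A_n=\bigotimes_i\C[t_i]$ and $\Lambda^{\bullet}\C^n=\bigotimes_i(\C\oplus\C\be_i)$, this is the tensor product of the $n$ two-term complexes $C_i\colon 0\to\C[t_i]\xrightarrow{D_i}\C[t_i]\to 0$. In the basis $\{t_i^m\}$ the map $D_i$ is upper triangular with diagonal entry $a_i\neq 0$, hence bijective, so each $C_i$ is acyclic; by K\"unneth over the field $\C$ the total complex is acyclic, which is exactly statement (a). I emphasize that the non-singularity hypothesis $a_i\neq 0$ for all $i$ is precisely what makes each $C_i$ acyclic.

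For (b), write $M_k=\operatorname{Im}\pi_k$ and $T_k=T(A_n^{\ba},V(\delta_k,k))$. Exactness from (a) yields short exact sequences $0\to M_{k-1}\to T_k\to M_k\to 0$ for $1\le k\le n-1$, with $M_0\cong T_0$ (as $\pi_0$ is injective) and $M_{n-1}=T_n$ (as $\pi_{n-1}$ is surjective). The two end terms are simple: $T_0=T(A_n^{\ba},V(\delta_0,0))$ has $V(\delta_0,0)\not\cong V(\delta_k,k)$ for every $k\in\{1,\dots,n\}$, so it is simple by the criterion recalled before the complex; and $T_n=T(A_n^{\ba},V(\delta_n,n))$ I would prove simple by a direct cyclic-vector argument in the model $A_n^{\ba}\cong e^{\ba\cdot t}\C[t]$ (any nonzero $W_n$-submodule is stable under $\frac{d}{dt}$ in each variable, hence contains $e^{\ba\cdot t}$, hence everything), or by a twisted-Hodge duality interchanging $T_k$ with $T_{n-k}$. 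Since $\Omega_{\ba}^{W}$ is abelian (Corollary \ref{torsion-2}), each $M_k$ lies in $\Omega_{\ba}^{W}$, so it suffices to show every middle $T_k$ has length exactly $2$: this forces both its submodule $M_{k-1}$ and its quotient $M_k$ to be simple, and then an induction outward from the simple ends gives simplicity of all $M_0,\dots,M_{n-1}$.

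I expect this length-$2$ bound to be the main obstacle. The bookkeeping that makes it plausible is the following computation of Whittaker spaces: $\operatorname{Wh}_{\ba}(M_k)=1\otimes\ker\!\big(a\wedge\colon\Lambda^{k+1}\C^n\to\Lambda^{k+2}\C^n\big)$, where $a=\sum_j a_j\be_j\neq 0$; because the single-vector Koszul map $a\wedge$ is exact, one gets $\dim\operatorname{Wh}_{\ba}(M_k)=\binom{n-1}{k}$, and these dimensions satisfy $\binom{n-1}{k-1}+\binom{n-1}{k}=\binom{n}{k}=\dim V(\delta_k,k)=\dim\operatorname{Wh}_{\ba}(T_k)$, in perfect agreement with the short exact sequence and with length $2$. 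To upgrade this consistency check into a proof of simplicity I would invoke the complete determination of the submodule lattice of the exceptional tensor modules $T(A_n^{\ba},V(\delta_k,k))$ from \cite{LLZ}, namely that their only composition factors are $M_{k-1}$ and $M_k$. A self-contained alternative, avoiding \cite{LLZ}, is to apply the weighting functor $\mathfrak{W}$: each $M_k$ lies in $\Omega_{\ba}^{W}$, so $\mathfrak{W}(M_k)$ is a uniformly bounded weight module, and one identifies it with a known simple Harish-Chandra $W_n$-module to conclude simplicity of $M_k$; this route is heavier but stays inside the techniques already set up in this section.
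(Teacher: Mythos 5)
Your proposal is correct, but it reverses the paper's logic, and for part (a) it is genuinely different. The paper never proves exactness directly: it quotes Theorem 3.5 of \cite{LLZ}, which asserts both that the images $\mathrm{Im}\,\pi_{k-1}$ are simple (this is already part (b)) and that $\mathrm{Ker}\,\pi_k/\mathrm{Im}\,\pi_{k-1}$ and $T(A_n^{\ba},V(\delta_n,n))/\mathrm{Im}\,\pi_{n-1}$ are trivial $W_n$-modules; since a trivial module in $\Omega_{\mathbf{a}}^W$ must be zero when $\ba$ is non-singular, exactness follows. Your identification of (\ref{complex}) with the Koszul complex of the commuting operators $D_i=\p_i+a_i$, factored as the tensor product of the two-term complexes $\C[t_i]\xrightarrow{D_i}\C[t_i]$ (each acyclic because $D_i$ preserves the finite-dimensional filtration by degree and acts by $a_i\neq 0$ on its associated graded) and resolved by K\"unneth, is a clean, self-contained proof of (a) that avoids \cite{LLZ} entirely; it even shows exactness already holds when merely one $a_i\neq 0$, since a single acyclic tensor factor kills all the cohomology. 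For part (b), however, you land on exactly the same key input as the paper: simplicity of the middle images is extracted from the composition structure of the exceptional tensor modules in \cite{LLZ}, which is precisely the citation the paper uses; note moreover that the statement you invoke (``the only composition factors of $T_k$ are $M_{k-1}$ and $M_k$'') already contains the simplicity of $M_{k-1}$ and $M_k$, so your length-two bound plus ``induction outward from the ends'' is logically redundant rather than a genuine reduction. Your end-point arguments are fine and are a nice supplement: $T_0$ is simple by the criterion recalled before (\ref{complex}), and the cyclic-vector argument for $T_n$ works, since applying $\p_k-a_k$ lowers degree in the model $e^{\ba\cdot t}\C[t]$, and $1\otimes v$ regenerates everything because $a_k\neq 0$.

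One caution: the proposed ``self-contained alternative'' via the weighting functor is not a proof as sketched. Simplicity of $\mathfrak{W}(M_k)$ does not by itself imply simplicity of $M_k$: for a proper nonzero submodule $N\subset M_k$, the induced map $\mathfrak{W}(N)\to\mathfrak{W}(M_k)$ need not be injective, so a proper submodule is not guaranteed to produce a proper nonzero weight submodule, and one cannot transfer simplicity back without additional work. If you want part (b) independently of \cite{LLZ}, that gap is the thing to close; otherwise your proof and the paper's are, for (b), the same citation dressed differently.
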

\begin{proof}
Then from Theorem 3.5 in \cite{LLZ}, the $W_n$-modules $\text{Im}\pi_{k-1}$ are simple, for $k=1,2,\ldots,n$. And $\text{Ker} \pi_{k}/\text{Im}\pi_{k-1}$, $T(A_n^{\ba}, V(\delta_n,n))/\text{Im}\pi_{n-1}$ are trivial $W_n$-modules, for $k=1,2,\ldots,n-1$. Since $a_i\neq 0$ for any $i$,  a trivial module in
  $\Omega_{\mathbf{a}}^W$ must be a zero module. So $\text{Ker} \pi_{k}=\text{Im}\pi_{k-1}$. Hence the complex (\ref{complex}) is exact.
\end{proof}
\subsection{An analogue of Skryabin's equivalence}

Let $\C_{\ba}=\C 1_{\ba}$ be the one dimensional $U(\Delta_n)$-module such that $\p_i 1_{\ba}= a_i 1_{\ba}$ for any $i\in\{1,\dots,n\}$.

Let $$Q_{\ba}=U(W_n)\otimes_{U(\Delta_n)} \C_{\ba},$$
an induced $W_n$-module, and
$$H_{\ba}=\text{End}_{W_n}(Q_{\ba})^{\text{op}},$$ an associative algebra over $\C$. Then $Q_{\ba}$ is both a left $U(W_n)$-module and a right  $H_{\ba}$-module. Let $H_{\ba}\text{-mod}$ be the category of finite dimensional $H_{\ba}$-modules.

\begin{theorem}\label{s-e}\begin{enumerate}[$($a$)$]
\item  As a right  $H_{\ba}$-module, $Q_{\ba}$ is free. More precisely, the subset
$$\{h^\bm \otimes 1_{\ba} \mid \bm\in \Z_{\geq 0}^n\}$$ is
a basis of  $Q_{\ba}$ as an  $H_{\ba}$-module.
\item The functors $M\mapsto \text{Wh}_{\ba}(M)$ and $V\mapsto Q_{\ba}\otimes_{H_{\ba}} V$ are inverse equivalence between  $\Omega_{\mathbf{a}}^W$ and $H_{\ba}\text{-mod}$ .
\end{enumerate}
\end{theorem}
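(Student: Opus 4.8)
The plan is to run the standard Skryabin machine, whose only nontrivial input here is the freeness supplied by Lemma \ref{finite}. The two functors are governed by Frobenius reciprocity: for any $W_n$-module $M$, evaluation at the cyclic generator $1\otimes 1_{\ba}$ gives a natural isomorphism
\[
\text{Hom}_{W_n}(Q_{\ba},M)\xrightarrow{\ \sim\ }\text{Wh}_{\ba}(M),\qquad f\mapsto f(1\otimes 1_{\ba}),
\]
since $\text{Hom}_{W_n}(U(W_n)\otimes_{U(\Delta_n)}\C_{\ba},M)\cong \text{Hom}_{U(\Delta_n)}(\C_{\ba},M)$. Precomposition makes the left-hand side a left $H_{\ba}$-module, transporting an $H_{\ba}$-structure to $\text{Wh}_{\ba}(M)$; together with the tensor--hom adjunction this shows $\mathcal{H}:=Q_{\ba}\otimes_{H_{\ba}}-$ is left adjoint to $\mathcal{G}:=\text{Wh}_{\ba}(-)\cong \text{Hom}_{W_n}(Q_{\ba},-)$. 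I would first record that $Q_{\ba}$ is itself a Whittaker module of type $\phi_{\ba}$: since $\text{ad}\,\p_i$ is locally nilpotent on $W_n$ and hence on $U(W_n)$, one has $(\p_i-a_i)(u\otimes 1_{\ba})=(\text{ad}\,\p_i)(u)\otimes 1_{\ba}$, so $\p_i-a_i$ acts locally nilpotently. Thus Lemma \ref{finite} applies to $Q_{\ba}$, which is the crucial point that turns everything below into a matter of counting. Note also that $\mathcal{G}(M)=\text{Wh}_{\ba}(M)$ is finite dimensional by definition of $\Omega_{\mathbf{a}}^W$, hence lands in $H_{\ba}\text{-mod}$.

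For part (a), every $W_n$-endomorphism of $Q_{\ba}$ is determined by the image of $1\otimes 1_{\ba}$, and that image is again a Whittaker vector; conversely each Whittaker vector defines such an endomorphism. This yields an isomorphism of right $H_{\ba}$-modules $H_{\ba}\xrightarrow{\ \sim\ }\text{Wh}_{\ba}(Q_{\ba})$, $\phi\mapsto (1\otimes 1_{\ba})\cdot\phi$, under which $\text{Wh}_{\ba}(Q_{\ba})=(1\otimes 1_{\ba})H_{\ba}$ is free of rank one (the cyclic generator has zero annihilator). By Lemma \ref{finite}, $Q_{\ba}$ is free over $U(\mh_n)$ with basis $\text{Wh}_{\ba}(Q_{\ba})$, so that
\[
Q_{\ba}=\bigoplus_{\bm\in\Z_{\geq 0}^n}h^{\bm}\,\text{Wh}_{\ba}(Q_{\ba})=\bigoplus_{\bm\in\Z_{\geq 0}^n}(h^{\bm}\otimes 1_{\ba})H_{\ba},
\]
using $h^{\bm}\cdot((1\otimes 1_{\ba})\phi)=(h^{\bm}\otimes 1_{\ba})\phi$ (endomorphisms commute with the $U(\mh_n)$-action). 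Each summand is free of rank one, giving (a).

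For part (b) I would check the unit and counit of the adjunction are isomorphisms. For $V\in H_{\ba}\text{-mod}$, part (a) gives $\mathcal{H}(V)=\bigoplus_{\bm}(h^{\bm}\otimes 1_{\ba})\otimes V\cong U(\mh_n)\otimes V$ as a $U(\mh_n)$-module, so $\mathcal{H}(V)$ is a Whittaker module of type $\phi_{\ba}$ that is free over $U(\mh_n)$ of rank $\dim V$; Lemma \ref{finite} then forces $\dim\text{Wh}_{\ba}(\mathcal{H}(V))=\dim V$, and the unit $v\mapsto (1\otimes 1_{\ba})\otimes v$ is an injective $H_{\ba}$-linear map between spaces of equal finite dimension, hence an isomorphism; in particular $\mathcal{H}(V)\in\Omega_{\mathbf{a}}^W$. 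For $M\in\Omega_{\mathbf{a}}^W$, the counit $\epsilon_M:Q_{\ba}\otimes_{H_{\ba}}\text{Wh}_{\ba}(M)\to M$ is $(h^{\bm}\otimes 1_{\ba})\otimes v\mapsto h^{\bm}v$; by part (a) the elements $(h^{\bm}\otimes 1_{\ba})\otimes v_j$ (for $\{v_j\}$ a basis of $\text{Wh}_{\ba}(M)$) form a basis of the source, and Lemma \ref{finite} says their images $\{h^{\bm}v_j\}$ form a basis of $M$, so $\epsilon_M$ is an isomorphism. This establishes the two inverse equivalences.

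The routine-but-delicate part will be the bookkeeping of left/right and opposite-algebra structures (ensuring $\text{Wh}_{\ba}(M)$ is a left $H_{\ba}$-module, that $Q_{\ba}$ is a right one, and that the unit and counit are genuinely $H_{\ba}$-linear rather than merely linear). The only real mathematical obstacle is making sure Lemma \ref{finite} is legitimately applicable to the induced module $Q_{\ba}$ and to $\mathcal{H}(V)$ — that these are Whittaker modules of type $\phi_{\ba}$ in the precise sense required — since it is exactly the $U(\mh_n)$-freeness supplied by that lemma (and ultimately by the non-singularity $\ba\in(\C^*)^n$) that converts all the isomorphism claims into bijections between explicit bases.
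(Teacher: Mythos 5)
Your proposal is correct and follows essentially the same route as the paper: Frobenius reciprocity identifies $H_{\ba}\cong \text{Wh}_{\ba}(Q_{\ba})$, Lemma \ref{finite} (applicable since $\ba$ is non-singular and $Q_{\ba}$, $Q_{\ba}\otimes_{H_{\ba}}V$ are Whittaker modules of type $\phi_{\ba}$) supplies the $U(\mh_n)$-freeness, and the unit and counit are then checked to be isomorphisms by matching bases. Your write-up actually fills in details the paper leaves implicit, notably the verification that $\ad\,\p_i$ acts locally nilpotently on $Q_{\ba}$ and the dimension-count argument replacing the paper's terse identification $\text{Wh}_{\ba}(Q_{\ba}\otimes_{H_{\ba}}V)=\text{Wh}_{\ba}(U(\mh_n)\otimes V)\cong V$.
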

\begin{proof}
(a).  By adjunction and the definition of Whittaker Vectors, we have $$\text{End}_{W_n}(Q_{\ba})\cong \text{Hom}_{\Delta_n}(\C_{\ba}, Q_{\ba})\cong \text{Wh}_{\ba}(Q_{\ba}).$$
Then $\text{Wh}_{\ba}(Q_{\ba})$ is a free $H_{\ba}$-module of rank $1$.

By Lemma \ref{finite}, $Q_{\ba}=U(\mh_n) \text{Wh}_{\ba}(Q_{\ba})$.
By PBW Theorem, $Q_{\ba}$ is $U(\mh_n)$-free. Then (a) follows.

(b).
 By (a) and Lemma \ref{finite}, for any $M\in \Omega_{\mathbf{a}}^W$,
 $$ Q_{\ba}\otimes_{H_{\ba}} \text{Wh}_{\ba}(M)\cong M.$$
  On the other side, for any $V\in H_{\ba}\text{-mod}$,  $$  \text{Wh}_{\ba}(Q_{\ba}\otimes_{H_{\ba}} V )=\text{Wh}_{\ba}(U(\mh_n)\otimes V )\cong V.$$
 Then we can complete the proof.
\end{proof}

 For finite $W$-algebras $W(\mg, e)$, a similar category  equivalence was shown by
  Serge Skryabin, see the appendix in \cite{P}.

\begin{remark} By Theorem \ref{c-t} and Theorem \ref{s-e}, we can obtain all finite dimensional simple $H_{\ba}$-modules.  It will be interesting  to characterize the category $H_{\ba}\text{-mod}$  of finite dimensional $H_{\ba}$-modules and find the  PBW basis of $H_{\ba}$. For finite $W$-algebras $W(\mg, e)$,
 the  PBW basis of $W(\mg, e)$ was given in \cite{P}. By Proposition 35 in \cite{BM}, $H_{\ba}$ is isomorphic to a subalgebra of  $U(L_n)$.
\end{remark}

\begin{center}
\bf Acknowledgments
\end{center}

\noindent   G.L. is partially supported by NSF of China(Grants
11771122).


\vspace{4mm}

\noindent   \noindent Y.Z.: School of Mathematics and Statistics,
Henan University, Kaifeng 475004, China. Email:  15518585081@163.com

\vspace{0.2cm}

 \noindent G.L.: School of Mathematics and Statistics,
and  Institute of Contemporary Mathematics,
Henan University, Kaifeng 475004, China. Email: liugenqiang@henu.edu.cn

\end{document}